\newtheorem{theorem}{Theorem}
\newtheorem{conjecture}[theorem]{Conjecture}
\newtheorem{lemma}[theorem]{Lemma}
\newtheorem{observation}[theorem]{Observation}
\newtheorem{claim}{Claim}
\newproof{proof}{Proof}
\begin{document}
\begin{frontmatter}

\title{On the neighbour sum distinguishing index of graphs with
bounded maximum average degree}

\author[LaBRI]{H. Hocquard \fnref{FRgrant}}
\ead{herve.hocquard@labri.fr}

\author[agh]{J. Przyby{\l}o\corref{cor1}\fnref{grantJP,MNiSW}} 
\ead{jakubprz@agh.edu.pl}

\cortext[cor1]{Corresponding author}
\fntext[FRgrant]{Supported by CNRS-PICS Project no. 6367 ``GraphPar''.}
\fntext[grantJP]{Supported by the National Science Centre, Poland, grant no. 2014/13/B/ST1/01855.}
\fntext[MNiSW]{Partly supported by the Polish Ministry of Science and Higher Education.}

\address[LaBRI]{LaBRI (Universit\'e de Bordeaux), 351 cours de la Lib\'eration, 33405 Talence Cedex, France}
\address[agh]{AGH University of Science and Technology, al. A. Mickiewicza 30, 30-059 Krakow, Poland}

\begin{abstract}
A proper edge $k$-colouring of a graph $G=(V,E)$ is an assignment $c:E\to \{1,2,\ldots,k\}$ of colours to the edges of the graph such that no two adjacent edges are associated with the same colour. A neighbour sum distinguishing edge $k$-colouring, or nsd $k$-colouring for short, is a proper edge $k$-colouring such that $\sum_{e\ni u}c(e)\neq \sum_{e\ni v}c(e)$ for every edge $uv$ of $G$. We denote by $\chi'_{\sum}(G)$ the neighbour sum distinguishing index of $G$, which is the least integer $k$ such that an nsd 
$k$-colouring of $G$ exists.
By definition 
at least maximum degree, $\Delta(G)$ colours are needed for this goal.
In this paper we prove that $\chi'_\Sigma(G) \leq \Delta(G)+1$ for any graph $G$ without isolated edges, and with
${\rm mad}(G)<3$, $\Delta(G) \geq 6$.

\end{abstract}

\begin{keyword}
Neighbour sum distinguishing index, maximum average degree, discharging method.
\end{keyword}

\end{frontmatter}

\section{Introduction}
A \emph{proper edge $k$-colouring} of a graph $G=(V,E)$ is an assignment 
of colours to the edges of the graph such that two adjacent edges do not host the same colour.
We use the standard notation, $\chi'(G)$, to denote the chromatic index of $G$.
A \emph{neighbour sum distinguishing} edge $k$-colouring, or \emph{nsd} $k$-colouring for short,
is a proper edge colouring $c:E\to \{1,2,\ldots,k\}$ such that for every edge $uv\in E$, there is no \emph{conflict} between $u$ and $v$,
{\it i.e.}, $s(u)\neq s(v)$, where $s(u)$ is the sum of colours taken on the edges incident with $u$.
In other words, for every vertex $u\in V$, $\displaystyle s(u) = \sum_{e \in E_u} c(e)$, where $E_u$ is the set of edges incident with $u$ in $G$.
We denote by $\chi'_{\sum}(G)$ the \emph{neighbour sum distinguishing index} of $G$, which is the least integer $k$ such that an nsd (edge) $k$-colouring of $G$ exists.
This graph invariant binds its two famous archetypes - the parameter associated with so called Zhang's Conjecture~\cite{Zhang},
where the required distinction is weaker and concerns sets of colours rather than their sums,
cf.~\cite{BalGLS,BonamyEtAl,Hatami,HocqMont,Hornak_planar,WangWang,Zhang} for representative results concerning it,
and the problem commonly referred to as 1--2--3 Conjecture~\cite{123KLT},
whose objective 
were not necessarily proper edge colourings in turn,
see~\cite{Louigi30,Louigi,KalKarPf_123,WangYu} for a few 
breakthroughs concerning this. 
The roots of this branch of graph theory date back to the 1980s, and the papers~\cite{ChartrandErdosOellermann,Chartrand}
on degree irregularities in graphs (and multigraphs) and the parameter \emph{irregularity strength} of a graph.
There first
integer edge weights (colours) became of use to represent the multiplicities of respective
edges in an investigated multigraph with a given underlying simple graph.
The sum $s(u)$ defined above corresponds then to the degree of a given vertex $u$ in the multigraph with underlying graph $G$,
see~\cite{Aigner,
Lazebnik,
Frieze,KalKarPf,Lehel,MajerskiPrzybylo2,Nierhoff} 
for more details and a few crucial results on the irregularity strength.

Note that as for other graph invariants of this type, the value of $\chi'_{\sum}(G)$ is well defined for all graphs without isolated edges.
By definition, the neighbour sum distinguishing index of every such graph $G$ is not smaller than $\chi'(G)$,
while by Vizing's theorem, $\chi'(G)$ equals the maximum degree of $G$, $\Delta(G)$, or $\Delta(G)+1$.
The following conjecture was proposed by Flandrin {\it et al.} in~\cite{FlandrinMPSW},
where it was also verified for a few classical graph families, including, {\it e.g.},
paths, cycles, complete graphs, complete bipartite graphs and trees.

\begin{conjecture}\label{Flandrin_et_al_Conjecture}
If $G$ is a connected graph of order
at least three different from the cycle $C_5$, then
$\chi'_{\sum}(G) \leq \Delta(G) + 2$.
\end{conjecture}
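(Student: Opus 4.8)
The plan is to reduce the statement to a single algebraic assertion about a graph polynomial and to attack that assertion with the Combinatorial Nullstellensatz, treating a short list of degenerate graphs by hand. Put $S = \{1, 2, \ldots, \Delta(G)+2\}$, so $|S| = \Delta + 2$, and introduce a variable $x_e$ for each edge $e \in E$. Writing $e \sim f$ for edges sharing a vertex, consider
\[
P \;=\; \prod_{e \sim f} (x_e - x_f)\,\cdot\,\prod_{uv \in E} \left( s(u) - s(v) \right),
\qquad s(u) = \sum_{e \ni u} x_e .
\]
An assignment $c : E \to S$ is exactly an nsd colouring precisely when $P(\{c(e)\}) \ne 0$: the first product forces properness and the second forces $s(u) \neq s(v)$ across every edge. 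By Alon's Combinatorial Nullstellensatz it then suffices to exhibit a monomial $\prod_e x_e^{t_e}$ of maximum total degree in $P$ whose coefficient is nonzero and which satisfies $t_e \le |S| - 1 = \Delta + 1$ for every $e$; any such monomial guarantees a choice of colours from $S$ with $P \neq 0$.

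First I would record the bookkeeping that makes the degree budget plausible. For $e = uv$ the variable $x_e$ occurs in the first product only through the $\deg u + \deg v - 2$ factors sharing an endpoint with $e$, and in the second product only through the edges incident to $u$ or $v$ other than $uv$ itself, since $x_{uv}$ cancels in the factor $s(u) - s(v)$. Hence $\deg_{x_e} P \le 2(\deg u + \deg v - 2)$, which in general far exceeds $\Delta+1$; the content of the statement is therefore not the total degree but the existence of a maximum-degree monomial in which no individual $t_e$ exceeds $\Delta+1$. I would try to produce such a monomial by fixing, at each vertex, an assignment of ``responsibilities'' that spreads the degree of the second product evenly over the incident edges, and by matching this against a nonvanishing choice in the properness product, whose maximum-degree coefficients can be read off from orientations of the line graph by the Alon--Tarsi method.

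To finish the reduction I would dispose of the cases the algebra cannot reach. Graphs with $\Delta \le 2$ are disjoint unions of paths and cycles; for a connected $G$ of order at least three this is a single path or cycle, all of which satisfy the bound except $C_5$, the unique excluded graph, by the verification already available in~\cite{FlandrinMPSW}. Small orders and the sporadic graphs on which the generic monomial argument degenerates, for instance when $\Delta$ is attained at adjacent vertices so that the degree budget for $x_{uv}$ becomes tight, would be checked directly or folded into an induction that deletes a vertex of minimum degree and re-extends the colouring using the slack supplied by the two extra colours.

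The hard part will be exactly the nonvanishing of the coefficient of a low-individual-degree monomial of $P$ for an \emph{arbitrary} connected graph. In contrast to the properness product alone, whose coefficients have a clean orientation interpretation, the interaction between the two products couples all edge variables globally, and there is no a priori reason for a convenient monomial to survive with nonzero coefficient. This is precisely the point at which a uniform argument is not yet known, which is why the assertion is stated as a conjecture rather than a theorem; for restricted sparse classes such as $\mad(G) < 3$ treated in this paper the coupling can be controlled locally through discharging and reducible configurations, whereas I would expect the general case to demand either a genuinely new combinatorial reading of the coefficient or a probabilistic substitute, a Lovász-Local-Lemma colouring scheme robust enough to operate with only two colours of slack.
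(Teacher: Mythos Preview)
The statement you were asked to prove is a \emph{conjecture}, not a theorem: it is quoted in the paper as the open problem of Flandrin \emph{et al.}, and the paper makes no attempt to prove it in full generality. There is therefore no ``paper's own proof'' to compare your attempt against; the paper's contribution is Theorem~\ref{th:summad}, a partial result under the hypotheses $\mathrm{mad}(G)<3$ and $\Delta(G)\ge 6$, obtained by discharging.

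Your write-up is not a proof but a programme, and you say so yourself. The Combinatorial Nullstellensatz reduction you describe is correct as far as it goes: a nonzero evaluation of $P$ is exactly an nsd $(\Delta+2)$-colouring, and the degree bookkeeping $\deg_{x_e}P\le 2(\deg u+\deg v-2)$ is right. The genuine gap is precisely the one you flag in your last paragraph: you need a monomial of \emph{maximum} total degree whose coefficient is nonzero and whose individual exponents are all at most $\Delta+1$, and you have no mechanism to produce one. The Alon--Tarsi orientation reading handles the properness product alone, but once you multiply in $\prod_{uv}(s(u)-s(v))$ the coefficients mix globally and there is no known combinatorial interpretation that forces a suitable monomial to survive. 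Existing Nullstellensatz attacks on this problem (e.g.\ the references~\cite{Przybylo_CN_1,Przybylo_CN_2} in the paper) require substantially more than two extra colours for exactly this reason. Your fallback suggestions---induction on a minimum-degree vertex, or a Lov\'asz Local Lemma scheme with only two colours of slack---are not developed and, as stated, do not come close to closing the gap: extending after deleting a vertex can force recolouring cascades that two spare colours do not absorb, and LLL-type arguments in this setting typically need slack growing with $\Delta$.

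In short: the approach is a reasonable framing of why the conjecture is hard, but it is not a proof, and the obstacle you identify is the actual open problem.
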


In general it is known that this conjecture is asymptotically correct, as confirmed by
the following probabilistic result of Przyby{\l}o from~\cite{Przybylo_asym_optim}.

\begin{theorem}
\label{Th_sum_asymptotic}
If $G$ is a connected graph of maximum degree $\Delta\geq 2$, then $\chi'_{\sum}(G)\leq (1+o(1))\Delta$.
\end{theorem}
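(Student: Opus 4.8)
The plan is to establish the bound by the probabilistic method. Fix $\varepsilon>0$; it suffices to show that every connected $G$ of maximum degree $\Delta$ exceeding a threshold depending on $\varepsilon$ admits an nsd colouring from the palette $\{1,\ldots,k\}$ with $k=\lceil(1+\varepsilon)\Delta\rceil$. I would first dispose of the properness constraint cheaply. By Vizing's theorem $G$ has a proper edge colouring with $\Delta+1$ colours, whose colour classes are matchings; keep this as a backbone and reserve the remaining $k-(\Delta+1)\approx\varepsilon\Delta$ colours as a \emph{surplus}. Each edge is then allowed either to stay on its backbone colour or to be \emph{promoted} to a surplus colour not already used by an adjacent edge. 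Any such modification keeps the colouring proper, so properness is maintained automatically and the only events left to control are the sum-conflicts $s(u)=s(v)$ across edges $uv$.

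Next I would inject randomness into the promotions so that each vertex sum $s(v)$ becomes a random variable spread over a wide range, and estimate the conflict probabilities. For an edge $uv$ the shared edge cancels, so $s(u)-s(v)$ is a signed sum of the colours on the edges incident to $u$ and to $v$ other than $uv$; since $G$ is simple these are distinct edges carrying independent random choices. Conditioning on all but one such choice, which ranges over $\Omega(\Delta)$ values, already yields $\Pr[s(u)=s(v)]=O(1/\Delta)$, and a local limit / anti-concentration estimate for a sum of $\Theta(\Delta)$ independent terms improves this to $O(\Delta^{-3/2})$ when both endpoints have large degree. Pendant vertices need no attention at all: if $\deg(u)=1$ then $s(u)=c(uv)$ is a single positive colour whereas $s(v)$ is a sum of at least two positive colours, so $s(u)\ne s(v)$ holds automatically.

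The crux — and what I expect to be the main obstacle — is that the sum-conflict events are \emph{globally} entangled: each $s(v)$ depends on all $\deg(v)$ incident edges, so the event attached to $uv$ shares random choices with up to $\Theta(\Delta^2)$ other conflict events. Even with the favourable bound $\Pr[s(u)=s(v)]=O(\Delta^{-3/2})$, the product of this probability with the dependency degree is $\Theta(\Delta^{1/2})\to\infty$, so a single direct application of the Lov\'asz Local Lemma cannot close the argument. The expected number of conflicts incident to a fixed vertex is nevertheless only $O(\deg(v)\,\Delta^{-3/2})=o(1)$, which indicates the right route: first use the randomisation to drive almost all pairs apart, so that with high probability the \emph{residual} conflict graph has bounded (indeed vanishing average) degree, and then repair the few surviving conflicts locally, using the reserved surplus at their endpoints and a Local Lemma applied to this sparse residual, where the dependency is now bounded and the estimates above are amply sufficient. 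Turning the first, global, stage into a rigorous statement — controlling the joint behaviour of all sums simultaneously despite their entanglement — is the delicate heart of the proof and the step I would expect to demand the most work.

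Combining the two stages produces, with positive probability, a proper edge colouring from $\{1,\ldots,\lceil(1+\varepsilon)\Delta\rceil\}$ in which every pair of adjacent vertices receives distinct sums; hence $\chi'_{\Sigma}(G)\le(1+\varepsilon)\Delta$ for all sufficiently large $\Delta$. As $\varepsilon>0$ is arbitrary, this is precisely the asserted bound $\chi'_{\Sigma}(G)\le(1+o(1))\Delta$.
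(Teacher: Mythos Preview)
The paper does not contain a proof of this theorem at all: it is quoted in the introduction as a known result of Przyby{\l}o~\cite{Przybylo_asym_optim} and serves only as background motivation. There is therefore no ``paper's own proof'' to compare your proposal against.

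As for the proposal itself, it is a reasonable strategic outline but not a proof, and you yourself locate the gap. You correctly observe that a single application of the Local Lemma fails because the dependency degree of the conflict events is $\Theta(\Delta^{2})$ while the individual probabilities are only $O(\Delta^{-3/2})$. Your proposed remedy is a two-stage scheme: randomise first so that the residual conflict graph is sparse, then repair locally. But the first stage is precisely the part you do not carry out; writing that it ``is the delicate heart of the proof and the step I would expect to demand the most work'' is an honest admission that the argument is incomplete. Concretely, an expectation bound of $o(1)$ conflicts per vertex does not by itself give, with positive probability, a residual conflict graph of bounded degree at \emph{every} vertex simultaneously; that requires a concentration or iterative argument you have not supplied. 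The repair stage is also underspecified: recolouring an edge to kill one conflict alters the sums at both endpoints and can create new conflicts with up to $2\Delta$ other neighbours, so one must argue carefully why the residual instance really has small enough dependency for the Local Lemma (or some other tool) to close it. Until these two points are made rigorous, what you have is a plan rather than a proof.
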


Other upper bounds can be found in~\cite{FlandrinMPSW,Przybylo_CN_1,Przybylo_CN_2,WangYan_sum}.
Recently, Bonamy and Przyby{\l}o \cite{BP14} also confirmed Conjecture~\ref{Flandrin_et_al_Conjecture}
for planar graphs with sufficiently large maximum degree 
proving that:

\begin{theorem} 
Any planar graph $G$ with $\Delta(G) \geq 28$ and with no isolated edges satisfies $\chi'_\Sigma(G) \leq \Delta(G)+1$.
\end{theorem}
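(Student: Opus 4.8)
The plan is to argue by contradiction through a minimal counterexample combined with the discharging method. Suppose $G$ is a planar graph with $\Delta := \Delta(G) \geq 28$ and no isolated edges that admits no nsd $(\Delta+1)$-colouring, chosen with $|E(G)|$ minimum, and fix the palette $\{1,\ldots,k\}$ with $k = \Delta+1 \geq 29$; we may assume $G$ is connected. The goal is to show, on the one hand, that such a minimal $G$ cannot contain any member of a list of \emph{reducible configurations}, and on the other, via an Euler-formula discharging argument, that every planar graph avoiding all of those configurations is too sparse to exist. Unlike the companion $\mad(G)<3$ result, planarity alone only gives $\mad(G)<6$, so the argument must genuinely exploit faces rather than a bare sparsity bound.

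\emph{Reducibility.} The engine is a family of local recolouring lemmas, driven by one arithmetic observation: with $k=\Delta+1$ colours, an edge incident to a vertex $v$ can be (re)coloured in at least $k-(d(v)-1) = \Delta+2-d(v)$ ways while preserving properness at $v$, whereas each neighbour of $v$ forbids at most one colour on that edge through a potential sum conflict. I would exploit that recolouring a single edge $uv$ shifts $s(u)$ and $s(v)$ by the same amount, so conflicts can be tracked by a small local accounting. Concretely, a minimal $G$ should contain no pendant edge at a vertex of degree at most roughly $(\Delta+3)/2$ (delete the pendant edge, extend by minimality, then recolour it avoiding the at most $d(v)-1$ sums of the other neighbours of $v$; the $u$--$v$ conflict itself is free since $s(v)>s(u)$ when $d(v)\ge 2$), no degree-$2$ vertex whose neighbours have controlled degree, and more generally no light subconfiguration on which the available palette strictly exceeds the number of properness- and sum-forbidden colours. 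Each lemma deletes a bounded substructure, invokes minimality, and rebuilds the colouring edge by edge, reserving enough freedom at every step to dodge the finitely many forbidden sums.

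\emph{Discharging.} Assign each vertex $v$ the charge $d(v)-4$ and each face $f$ the charge $d(f)-4$, so that Euler's formula yields total charge $-8<0$. I would then design rules moving charge from high-degree vertices and large faces toward the charge-deficient small vertices: each big vertex donates a fixed amount to each incident small neighbour, and each large face contributes to its incident $2$- and $3$-vertices. Using the reducibility lemmas to bound how many small neighbours a big vertex must support and how low-degree vertices can cluster around a face, the aim is to force every vertex and face to finish with nonnegative charge, contradicting the negative total. The threshold $\Delta\geq 28$ is precisely what makes the worst-case local balance in these rules come out nonnegative.

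\emph{Main obstacle.} The hard part is the coupling between the global sum condition and local recolouring: fixing an edge repairs properness locally but simultaneously perturbs two vertex sums, threatening fresh conflicts at up to about $2\Delta$ further vertices. Controlling this cascade, so that the greedy/repair colouring order terminates with all neighbouring sums distinct, is where the real work lies, and it is what forces the reducible configurations to carry tight degree thresholds and ultimately pins the constant at $28$. A secondary difficulty is the bookkeeping of the discharging around faces incident to several low-degree vertices, which requires the configuration list to be exhaustive enough to exclude every locally sparse pattern that planarity permits.
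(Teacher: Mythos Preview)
This theorem is not proved in the present paper: it is quoted, without proof, as a result of Bonamy and Przyby{\l}o~\cite{BP14}, while the paper's own contribution is the separate Theorem~\ref{th:summad} on graphs with $\mad(G)<3$. There is therefore no proof here against which to compare your proposal.

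Independently of that, what you have supplied is a strategy outline rather than a proof. The architecture you sketch---a minimal counterexample, a family of reducible configurations handled by local recolouring, and vertex/face discharging with initial charges $d(v)-4$ and $d(f)-4$ summing to $-8$ by Euler's formula---is indeed the expected shape of such an argument and matches the approach of~\cite{BP14}. However, you state no concrete configuration with a complete reducibility proof, give no explicit discharging rule, and verify no charge balance; sentences such as ``I would then design rules'' and ``the aim is to force every vertex and face to finish with nonnegative charge'' record intentions only. The entire content of a result of this kind lies in the specific (typically lengthy) list of configurations and the case analysis showing the rules suffice; without those details there is nothing to check, and in particular nothing that singles out the threshold $\Delta\ge 28$ rather than any other constant.
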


Let ${\rm mad}(G)=\max\left\{\frac{2|E(H)|}{|V(H)|},\;H \subseteq G\right\}$ be the maximum average degree of the graph $G$, where $V(H)$ and $E(H)$ are the sets of vertices and edges of $H$, respectively.
This is a conventional measure of sparseness of an arbitrary graph (not necessary planar). For more details on this invariant see \cite{Toft}, where properties of the maximum average degree are exhibited and where it is proved that maximum average degree may be computed by a polynomial algorithm. Moreover it can be efficiently computed by translating the question into a flow problem on the right graph \cite{Coh10}.

Dong {\it et al.} first made the link between maximum average degree and neighbour sum distinguishing index \cite{DongWang_mad}. They proved the following result.

\begin{theorem} \label{th:summadCite1} 
Any graph $G$ with no isolated edges, $\Delta(G) \geq 6$ and ${\rm mad}(G)<\frac52$ satisfies $\chi'_\Sigma(G) \leq \Delta(G)+1$.
\end{theorem}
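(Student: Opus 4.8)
The plan is to argue by contradiction, combining reducible configurations with the discharging method. Assume the statement fails and let $G=(V,E)$ be a counterexample with $|E|$ smallest; write $\Delta=\Delta(G)\geq 6$. We may assume $G$ is connected (else handle the components separately) and that $G$ is not one of the few trivial exceptions such as a star. By the choice of $G$, every graph $H$ without isolated edges with $|E(H)|<|E|$ and $\Delta(H)\leq\Delta$ admits an nsd $(\Delta+1)$-colouring (in the rare event that removing edges drops $\Delta$ below $6$, one either checks the needed colouring by hand or restricts to deletions that keep $\Delta$ fixed). The engine of every reduction is the same: remove a well-chosen edge (or a small-degree vertex together with its incident edges) from $G$, colour what remains by minimality, and put the removed edge(s) back. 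When a single edge $xy$ is reinstated, a colour is forbidden for it only if it already occurs on one of the $d(x)+d(y)-2$ edges adjacent to $xy$, or if it would create a conflict at $x$ or at $y$ with one of their remaining neighbours (at most $d(x)+d(y)-2$ further values); and, crucially, the potential conflict \emph{between $x$ and $y$} does not depend on $c(xy)$, so it either never occurs or is a single global obstruction which one destroys by recolouring one more edge at $x$ or $y$. Since $\Delta+1\geq d(v)+1$ at every vertex $v$, this count has slack except when $d(x)+d(y)$ is large.

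Feeding that machine, I would show that $G$ contains none of a short list of configurations. At the easy end: $G$ has no isolated edge by hypothesis, hence no two adjacent $1$-vertices, and moreover no $1$-vertex is adjacent to a vertex of degree at most $4$ (deleting the pendant vertex and extending forbids at most $2\cdot 3 = 6 < \Delta+1$ colours for the reinstated edge), so every $1$-vertex hangs from a $5^{+}$-vertex and no vertex carries more pendant neighbours than its charge can afford. Then the harder, $2$-vertex part: two adjacent $2$-vertices force both outer neighbours of that thread to have large degree, threads of $2$-vertices cannot be long, a $2$-vertex adjacent to a $3$-vertex is reducible, and a $3$- or $4$-vertex cannot have too many neighbours of degree at most $2$. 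Each is verified as above, occasionally spending one extra recolouring to defeat the forced $xy$-conflict; one must also make sure the auxiliary graph has no isolated edge, and this is exactly what dictates which edge or vertex gets deleted (for instance, a pair of consecutive $2$-vertices rather than one).

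Finally I would discharge. Assign each $v$ the initial charge $\mu(v)=2d(v)-5$; since ${\rm mad}(G)<\frac52$ we get $\sum_{v\in V}\mu(v)=4|E|-5|V|<0$. As $2d(v)-5\geq 1$ whenever $d(v)\geq 3$, all the negative charge sits on the $1$- and $2$-vertices, which by the reductions are sparse and surrounded by large-degree vertices: a $1$-vertex is short $3$ units, a $2$-vertex short $1$. The rules push charge inward --- each $5^{+}$-vertex gives $3$ to every adjacent $1$-vertex, every $3^{+}$-vertex pays towards the $2$-vertices it sees (the cost of a thread being split between its two endpoints) --- with the exact amounts calibrated through the degree restrictions so that no donor is overdrawn. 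Checking $\mu^{*}(v)\geq 0$ for every $v$ then yields $0\leq\sum_{v}\mu^{*}(v)=\sum_{v}\mu(v)<0$, a contradiction. The main obstacle is the reducibility of the tight configurations --- a $2$-vertex beside a $3$-vertex, short threads, low-degree vertices with several small neighbours --- where the colour count leaves essentially no room, so each needs a delicate choice of what to delete and which auxiliary edge to recolour; and the configuration list and the discharging rules have to be designed together for the final balance to work. Pushing exactly this analysis further, in particular neutralising the $3$-vertices that become useless for discharging, is what allows the stronger bound ${\rm mad}(G)<3$.
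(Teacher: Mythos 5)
There is a genuine gap: what you have written is a strategy outline, not a proof, and the parts you defer are precisely the content of the theorem. In the paper this statement is not proved directly at all; it is obtained as a corollary of the stronger Theorem~\ref{th:summad} (${\rm mad}<3$), whose proof consists exactly of the two ingredients you leave unspecified: an explicit list of reducible configurations (the thirteen configurations of Claim~\ref{claimstructure}, each with a complete deletion-and-recolouring argument, several of them relying on Lemma~\ref{MartheLemma} to produce many candidate sums at the central vertex) and explicit discharging rules whose balance is verified case by case, together with the ghost-vertex observation that lets $1$-vertices carry weight $-1$. You name your configurations only loosely (``threads cannot be long'', ``a $2$-vertex adjacent to a $3$-vertex is reducible'', ``a $3$- or $4$-vertex cannot have too many small neighbours''), give no reducibility proofs, and give no discharging amounts beyond the initial charge $2d(v)-5$; you yourself concede that ``the main obstacle'' is the tight reductions and that the rules and the list ``have to be designed together''. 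Until that design is actually carried out and verified, the argument is not a proof of the statement.

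Moreover, the one general tool you do offer is unsound exactly where you would need it. You claim that the potential conflict between the endpoints $x,y$ of the reinstated edge is ``a single global obstruction which one destroys by recolouring one more edge at $x$ or $y$''. In the tight configurations the only such edges are incident with a vertex of degree close to $\Delta$ (e.g.\ a $2$-vertex whose second neighbour is a $\Delta$-vertex), and recolouring an edge at a $\Delta$-vertex faces up to $2\Delta-2>\Delta+1$ constraints, so this move is simply unavailable; the paper instead tailors each reduction individually (deleting several edges at once, switching colours of pendant edges, invoking Lemma~\ref{MartheLemma}, or even adding an edge or splitting a vertex --- which is why it orders graphs by the vector $(n_k,\ldots,n_1)$ rather than by $|E|$). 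Two further set-up points need repair: your induction hypothesis (``every $H$ with $|E(H)|<|E|$ and $\Delta(H)\le\Delta$ is colourable with $\Delta+1$ colours'') does not follow from edge-minimality of a counterexample, since the statement being proved says nothing when $\Delta(H)\le 5$; the clean fix, used in the paper, is to fix $k\ge 6$ and prove the statement for all graphs with $\Delta\le k$, rather than ``keeping $\Delta$ fixed'' or checking cases by hand. And you need not forbid isolated edges in the auxiliary graph: the paper simply assigns them arbitrary colours and resolves the conflicts when the deleted material is restored, which gives the reductions considerably more freedom than your restriction allows.
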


This subject was intensively studied afterwards, and the following improvements have been provided.

\begin{theorem} \cite{HuChenLuoMiao} \label{th:summadCite2}
Any graph $G$ with no isolated edges, $\Delta(G) \geq 6$ and ${\rm mad}(G)< \frac{8}{3}$ satisfies $\chi'_\Sigma(G) \leq \Delta(G)+1$.
\end{theorem}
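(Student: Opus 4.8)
The plan is the standard reducible-configurations-plus-discharging argument, run on a minimal counterexample. Suppose the statement fails and let $G$ be a counterexample with $|E(G)|$ least; put $\Delta:=\Delta(G)\ge 6$ and fix the colour palette $C:=\{1,\ldots,\Delta+1\}$. One may assume $G$ is connected with $\delta(G)\ge 1$, and --- so that minimality may be applied to subgraphs whose maximum degree has fallen below $6$ --- it is convenient to prove instead the formally stronger statement that every graph $H$ with no isolated edges and ${\rm mad}(H)<\frac{8}{3}$ admits an nsd $(\max\{\Delta(H),6\}+1)$-colouring; then every proper subgraph of $G$ has an nsd colouring with colours in $C$.

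\emph{Step 1: reducible configurations.} I would first assemble a list of local substructures that $G$ cannot contain. Each is excluded by the same routine: delete a small piece of $G$ (an edge, a low-degree vertex, a short pendant path, or a collection of pendant neighbours of a single vertex), check that the remaining graph has no isolated edge, colour it by minimality, and re-introduce the deleted edges one at a time. The governing estimate is elementary: to colour a missing edge $uv$, properness forbids at most $d(u)-1$ colours at $u$ and $d(v)-1$ at $v$, and the constraints $s(u)\ne s(x)$, $s(v)\ne s(y)$ forbid at most one further colour for each already-coloured neighbour $x$ of $u$ and $y$ of $v$ --- those involving a pendant neighbour being automatically satisfied, as the sum at a $1$-vertex is a proper summand of the sum at its neighbour. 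Set against $|C|=\Delta+1\ge 7$, this yields configurations of the following kinds: a $1$-vertex is adjacent only to vertices of large degree, which in turn have few $1$-neighbours; no maximal thread of $2$-vertices is long (to destroy a long one, delete an interior $2$-vertex and re-extend the two new edges, choosing their colours so as to avoid the forced equality $s(x_1)=s(x_2)$ which occurs exactly when two consecutive edges at a $2$-vertex receive the same colour at their outer ends); a $3$-vertex has no $1$-neighbour and few $2$-neighbours; and further bounds on how many small-degree neighbours, or threads, a vertex of a given degree may have. In the cases where a greedy re-extension is blocked only by such a forced equality, I would first recolour one of the two outer edges --- which is possible when its far endpoint has small enough degree --- to release the extension.

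\emph{Step 2: discharging.} Assign each vertex $v$ the charge $\mu(v)=d(v)-\frac{8}{3}$. Since ${\rm mad}(G)<\frac{8}{3}$, taking $H=G$ in the definition gives $\sum_{v\in V(G)}\mu(v)=2|E(G)|-\frac{8}{3}|V(G)|<0$. Only the vertices of degree $1$ (charge $-\frac{5}{3}$) and $2$ (charge $-\frac{2}{3}$) are in deficit, so the discharging rules move charge from the vertices of degree $\ge 3$ towards them --- schematically, each $\ge 3$-vertex gives an adjacent $1$-vertex exactly the amount that raises it to $0$, and pays its share (split with the other endpoint) of the deficit of each incident thread of $2$-vertices; this is a purely vertex-based discharging, $G$ not being assumed planar. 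Using the reducible configurations to bound, for each degree $d$, the number of $1$-neighbours and of threads a $d$-vertex can have, one verifies that the new charge satisfies $\mu^*(v)\ge 0$ for all $v$: deficient vertices are raised precisely to $0$, and higher-degree vertices retain a nonnegative remainder. Then $0>\sum_v\mu(v)=\sum_v\mu^*(v)\ge 0$, a contradiction, completing the proof.

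The reducibility part, and inside it the recolouring cases, is where I expect the real difficulty. Deleting and restoring an edge shifts the sums at both endpoints, and these shifts can propagate along incident threads or to neighbouring branch vertices, so each non-greedy case requires following a short chain of new constraints and confirming that the number of forbidden colours never reaches $\Delta+1$. Equally delicate is tuning the discharging so that the finite list of reducible configurations is at once rich enough to force $\mu^*(v)\ge 0$ everywhere and restrained enough that every configuration on it is truly reducible; securing both --- in practice by iterating between strengthening the configuration list and re-auditing the charge flow --- is where most of the work lies.
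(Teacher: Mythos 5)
Your submission is a plan, not a proof: the two components on which the whole argument rests --- the explicit list of reducible configurations together with their reducibility verifications, and the audited discharging --- are exactly what you leave out, and you say so yourself (``I would first assemble a list of local substructures\dots'', ``one verifies\dots'', ``this is where most of the work lies''). The gap is not merely one of detail. The greedy estimate you propose as the ``governing estimate'' (at most $d(u)-1+d(v)-1$ properness constraints plus one sum constraint per coloured neighbour) is insufficient on its own: for instance, re-inserting a pendant edge at a vertex $u$ of degree close to $\Delta$ already faces up to $2\Delta-2\geq\Delta+2$ constraints, so one must carefully restrict which configurations are claimed reducible (the paper's analogue, (C1), only excludes a $1$-vertex whose neighbour has degree at most $\frac{k}{2}+1$), exploit the automatic distinction of high-degree vertices from their $2$-neighbours (inequality (1) in the paper), and, crucially, use a tool like the paper's Lemma~\ref{MartheLemma} to recolour \emph{several} deleted edges at one vertex simultaneously so as to realise many distinct sums there --- one-edge-at-a-time counting cannot distinguish a vertex from a crowd of small-degree neighbours, which is precisely the situation in the configurations that make the discharging close. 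Two further unexamined points: minimality by $|E(G)|$ limits you to deletions, whereas some natural reductions split a vertex or trade edges without decreasing $|E|$ (the paper orders counterexamples lexicographically by degree counts for this reason); and with charge $d(v)-\frac{8}{3}$ a $1$-vertex needs $\frac{5}{3}$, so whether donors stay nonnegative depends entirely on the unspecified configuration list --- nothing in your text certifies that a consistent list exists, let alone that each item on it is reducible.

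For comparison: the paper does not prove this statement directly at all. It is quoted from Hu--Chen--Luo--Miao and then obtained as an immediate corollary of the paper's stronger Theorem~\ref{th:summad} (${\rm mad}(G)<3$ instead of $<\frac{8}{3}$), whose proof follows the same general scheme you sketch but with the hard work done: a lexicographic minimality order, thirteen explicit forbidden configurations (C1)--(C13) each verified by a concrete extension or recolouring argument, the list-sum Lemma~\ref{MartheLemma}, charge $\omega(v)=d(v)-3$, and the ``ghost vertices'' Observation~\ref{obs1v} which lets $1$-vertices finish at weight $-1$ rather than $0$. So even as a strategy your outline tracks the cited original rather than the present paper; but the decisive issue is that, as written, it does not constitute a proof of the theorem.
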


\begin{theorem} \cite{HuChenLuoMiao,YuQuWangWang} \label{th:summadCite3}
Any graph $G$ with no isolated edges, $\Delta(G) \geq 5$ and ${\rm mad}(G)< 3$ satisfies $\chi'_\Sigma(G) \leq \Delta(G)+2$.
\end{theorem}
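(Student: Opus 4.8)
The plan is to argue by minimal counterexample and discharging. It suffices to show that for every integer $\Delta\geq 6$, every graph $G$ with no isolated edge, $\mathrm{mad}(G)<3$ and $\Delta(G)\leq\Delta$ admits an nsd $(\Delta+1)$-colouring, since taking $\Delta=\Delta(G)$ then yields the theorem. So fix $\Delta\geq 6$ and suppose $G$ is a counterexample with $|E(G)|$, and then $|V(G)|$, as small as possible. Then $G$ is connected (a component would be a smaller counterexample, no component is an isolated edge, and a $P_3$-component is trivially coloured), $\delta(G)\geq 1$, and, for use later, $2|E(H)|-3|V(H)|<0$ for every $H\subseteq G$; in particular $\sum_{v\in V(G)}(d_G(v)-3)<0$.

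The bulk of the work is to establish a list of \emph{reducible configurations} --- local structures that cannot occur in $G$. For each, assuming it occurs, one deletes a few edges or vertices, checks that the resulting $G'$ still has no isolated edge and still satisfies $\mathrm{mad}(G')\leq\mathrm{mad}(G)<3$ and $\Delta(G')\leq\Delta$, colours $G'$ by minimality, and puts the deleted edges back. The elementary extension move: to colour a single returned edge $e=xy$ we must avoid the colours already present on the other edges at $x$ and at $y$, together with, for each already-coloured neighbour $z$ of $x$ (resp.\ of $y$), the one value of $c(e)$ that would cause $s(x)=s(z)$ (resp.\ $s(y)=s(z)$); among $\Delta+1$ colours a valid choice remains as long as the number of forbidden values, at most $2(d(x)-1)+2(d(y)-1)$, does not exceed $\Delta$. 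More mileage is obtained by first recolouring one incident edge, or an entire thread, or by applying the Combinatorial Nullstellensatz to satisfy the properness and all sum constraints at a low-degree vertex simultaneously. This machinery should forbid, with thresholds depending on $\Delta$, configurations such as: pendant edges at vertices of small degree; degree-$2$ vertices with a neighbour of small degree; adjacent vertices both of small degree; degree-$3$ vertices incident to too many degree-$2$ vertices; and threads (maximal paths of degree-$2$ vertices) that are too long or whose two endpoints both have small degree.

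With the reducible configurations ruled out, the proof is finished by discharging. Assign to each $v$ the charge $\mu(v)=d_G(v)-3$, so that $\sum_v\mu(v)=2|E(G)|-3|V(G)|<0$, while vertices of degree $\geq 4$ carry positive charge, degree-$3$ vertices carry $0$, and degree-$2$ and degree-$1$ vertices carry $-1$ and $-2$. I would then design rules moving charge from the degree-$\geq 4$ vertices --- the only source of positive charge --- out along edges and threads to the nearby degree-$1$ and degree-$2$ vertices, with amounts chosen so that, using the absence of the reducible configurations to ensure every deficient vertex receives enough, each vertex ends with charge $\mu^{*}(v)\geq 0$. Since the total charge is unchanged this gives $0\leq\sum_v\mu^{*}(v)=\sum_v\mu(v)<0$, a contradiction, proving the theorem.

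I expect the main obstacle to be the treatment of threads of degree-$2$ vertices: since $\mathrm{mad}(G)<3$ permits them to be arbitrarily long (a spider with one centre of degree $\Delta$ and long legs has $\mathrm{mad}<2$), they cannot be paid for from their endpoints alone, so the reducible configurations must constrain thread lengths and end-degrees rather tightly, and the corresponding colour-extension arguments --- recolouring a whole thread while keeping all incident sums distinct, which is delicate because along a thread the colours at distance $1$ and at distance $2$ must all be pairwise different --- are where the real effort goes. This is exactly the difficulty separating $\mathrm{mad}(G)<8/3$ (Theorem~\ref{th:summadCite2}) from $\mathrm{mad}(G)<3$, and it is also what forces the hypothesis $\Delta(G)\geq 6$.
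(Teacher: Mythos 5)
Your opening reduction is in the same spirit as the paper, which obtains this statement not by a separate argument but from its main result: the proof of Theorem~\ref{th:summad} is carried out for an arbitrary fixed $k\ge 6$ and shows that every graph with no isolated edge, $\Delta\le k$ and ${\rm mad}<3$ is nsd $(k+1)$-colourable, so for $\Delta(G)=5$ one applies it with $k=6$ to get $\chi'_\Sigma(G)\le 7=\Delta(G)+2$. (Note that your phrase ``taking $\Delta=\Delta(G)$'' does not cover the case $\Delta(G)=5$, which is precisely the case this theorem adds; you need $\Delta=6$ there.) However, everything after the reduction is a plan rather than a proof: you never exhibit the reducible configurations, never prove a single one reducible, never state the discharging rules, and never verify the final charges. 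These items are the entire technical content of the argument --- in the paper, thirteen configurations (C1)--(C13), a lemma (Lemma~\ref{MartheLemma}) counting the distinct sums obtainable from rainbow choices out of prescribed colour lists, and four discharging rules with a case analysis --- so as it stands the proposal does not establish the statement.

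Beyond the missing content, two points in your sketch would fail if pursued literally. First, the target ``each vertex ends with charge $\mu^{*}(v)\ge 0$'' with initial charge $d(v)-3$ is too strong at $1$-vertices: each leaf would have to receive $2$, yet a vertex of degree $d$ carrying as many as $d-3$ pendant leaves (all other neighbours of large degree) cannot be excluded by the local extension arguments available here --- the paper only proves $d-2$ pendant leaves reducible, configuration (C8) --- and such a vertex has only $d-3$ units to distribute. The paper circumvents exactly this with the ghost-vertex method (Observation~\ref{obs1v}): $1$-vertices are only required to reach final charge $-1$, and the contradiction comes from ${\rm mad}(H[V_1])\ge 3$ rather than from nonnegativity of every charge. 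Second, the obstacle you single out as central --- arbitrarily long threads of $2$-vertices --- does not in fact arise in a minimal counterexample: a $2$-vertex both of whose neighbours have degree at most $k/2$ is reducible (configuration (C2)), so no three consecutive $2$-vertices occur and no thread recolouring is needed; the genuinely delicate configurations concern vertices with prescribed numbers of deficient ($1$-vertices, bad $2$-vertices) and half-deficient (good $2$-, bad $3$-vertices) neighbours, such as (C9)--(C13), which your outline does not anticipate.
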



In this paper, we strengthen all three results above by proving the following (Note that in fact Theorem~\ref{th:summad} below
implies all Theorems~\ref{th:summadCite1}--\ref{th:summadCite3} above).

\begin{theorem}\label{th:summad}
Any graph $G$ with no isolated edges, $\Delta(G) \geq 6$ and ${\rm mad}(G)<3$ satisfies $\chi'_\Sigma(G) \leq \Delta(G)+1$.
\end{theorem}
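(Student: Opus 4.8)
The plan is to argue by contradiction using the discharging method, following the now-standard template for neighbour sum distinguishing colourings on sparse graphs. Suppose $G$ is a counterexample to Theorem~\ref{th:summad} with the fewest edges (then the fewest vertices), so $G$ has no isolated edges, $\Delta:=\Delta(G)\geq 6$, $\mathrm{mad}(G)<3$, and $\chi'_\Sigma(G)\geq \Delta+2$. Set $k=\Delta+1$. The first block of the argument is \emph{structural}: I would establish a list of reducible configurations that cannot occur in $G$. The typical ones are: $G$ has no vertex of degree $1$ adjacent to a vertex of degree $\leq 2$; more refined, a $1$-vertex's neighbour has degree $\geq 5$ or so; no two $2$-vertices are adjacent; a $2$-vertex is not adjacent to a vertex of small degree; small-degree vertices cluster only in controlled ways (e.g.\ a $d$-vertex with $d$ small has at most a bounded number of neighbours of degree $2$); and threads/paths of $2$-vertices are short. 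Each such claim is proved by taking the purported configuration, deleting (or contracting) a suitable edge/vertex to get a smaller graph $G'$, invoking minimality to get an nsd $k$-colouring of $G'$, and then extending it to $G$: the proper-colouring constraint blocks at most $\Delta-1$ colours at the edge being coloured, leaving at least $2$ available, and among those we dodge the at most one or two ``sum conflicts'' with the (few) already-coloured neighbours — here one exploits that an uncoloured pendant or thread edge gives a free parameter to slide a vertex sum away from its neighbour's.

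The second block is the \emph{discharging} itself. Assign to each vertex $v$ the charge $\mu(v)=d(v)-3$; since $\mathrm{mad}(G)<3$ we have $\sum_v d(v) < 3|V|$, hence $\sum_v \mu(v) < 0$. I would then design discharging rules, typically of the form ``every vertex of degree $\geq 3$ sends $\tfrac12$ (or $1$, or a rule depending on the degree) along each edge to each adjacent $2$-vertex'' and ``a $1$-vertex receives $2$ from its neighbour'', possibly with an extra rule handling $3$-vertices adjacent to many $2$-vertices. The goal is to show that after redistribution every vertex has nonnegative final charge $\mu^*(v)\geq 0$. For $v$ with $d(v)=1$: $\mu^*(v)=1-3+2=0$ using that its neighbour has degree $\geq 3$ (a reducible configuration rules out smaller). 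For $d(v)=2$: $\mu^*(v)=2-3+2\cdot\tfrac12 = 0$, using that both neighbours have degree $\geq 3$. For $d(v)=3$: it starts at $0$ and must not give too much away — here the reducible configurations bounding how many $2$- and $1$-neighbours a $3$-vertex has are essential, possibly forcing a receiving rule from higher-degree vertices. For $d(v)=d\geq 4$: $\mu^*(v)\geq d-3 - d\cdot\tfrac12 = \tfrac d2 - 3 \geq -1$, which is not yet enough, so the rules must be tuned (e.g.\ larger-degree vertices give less per neighbour, or the reducible configurations cap the number of light neighbours of a $4$- or $5$-vertex) so that $\mu^*(v)\geq 0$ throughout. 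A contradiction with $\sum_v\mu^*(v)=\sum_v\mu(v)<0$ then finishes the proof.

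The main obstacle, I expect, is the interface between the two blocks — namely proving a strong enough yet still true list of reducible configurations to make the discharging close. The delicate cases are $3$-, $4$- and $5$-vertices incident with several $2$-vertices, because $\Delta\geq 6$ leaves rather little slack: deleting a thread edge at such a vertex and re-extending forces one to control \emph{several} sum constraints simultaneously at the low-degree vertex, and the counting of forbidden colours (properness from up to $\Delta-1$ neighbours plus one or two sum conflicts) is tight. One standard device to push these through is to not merely delete but to delete-and-add: remove a $2$-vertex $v$ with neighbours $x,y$ and instead add the edge $xy$ (when $x\not\sim y$), colour the smaller graph, then recolour the two edges at $v$ with the freed-up colour of $xy$ plus one more, choosing the second so that $s(v)\neq s(x),s(y)$ and $s(x),s(y)$ keep their established distinctions with \emph{their} other neighbours — the extra degree of freedom from the two edges at $v$ is exactly what makes this work. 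Getting this bookkeeping right for every small configuration, and simultaneously ensuring the chosen discharging weights make the $\Delta\geq 6$ arithmetic nonnegative at vertices of degree $4,5$, is the crux; the rest is routine.
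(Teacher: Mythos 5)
Your proposal reproduces the paper's general strategy (minimal counterexample, reducible configurations, discharging from the charge $d(v)-3$), but as it stands it has concrete gaps that are exactly where the real work lies, and the specific choices you commit to would not close. First, several of the ``typical'' reducible configurations you list are not available: in particular ``no two $2$-vertices are adjacent'' cannot be proved reducible here (deleting the edge between two adjacent $2$-vertices leaves a sum conflict $s(u)=s(v)$ whenever their outer edges happen to carry the same colour, and no choice of the reinserted colour fixes that). The actual proof must tolerate adjacent $2$-vertices (``bad'' $2$-vertices) and instead proves a list of thirteen finer configurations involving bad/good $2$- and $3$-vertices and ``deficient/half-deficient'' neighbours, several of which ((C8), (C9), (C12), (C13)) require a dedicated counting lemma (Lemma~\ref{MartheLemma}) guaranteeing at least $\sum_i|L_i|-t^2+1$ distinct sums when $t$ edges at one vertex are recoloured simultaneously from lists $L_1,\dots,L_t$ with pairwise distinct colours; nothing in your sketch supplies this tool, and the one-edge-at-a-time counting you describe does not suffice for those configurations. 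Also, your minimality measure (fewest edges, then vertices) does not support the reductions the paper needs: the vertex-splitting used for configuration (C7) preserves the number of edges and increases the number of vertices, so it is not ``smaller'' in your order; the paper instead orders graphs lexicographically by the degree sequence $(n_k,\dots,n_2,n_1)$ with a fixed $k\ge 6$ and $\Delta(H)\le k$, which also sidesteps the problem that after deletions the smaller graph may have $\Delta<6$ or isolated edges (handled there by colouring isolated edges arbitrarily).

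Second, your discharging cannot be made to balance with the structural facts that are actually provable. You require every final charge to be nonnegative, hence every $1$-vertex to receive $2$; but the strongest available statement about pendant neighbours is (C8): a $d$-vertex may have up to $d-3$ neighbours of degree $1$, and then it would have to send $2(d-3)$ out of a charge of $d-3$, which is impossible for $d\ge 4$ no matter how you tune the rules (and the reducibility computation for many pendant edges does not exclude $\lceil k/2\rceil-1$ of them, which already exceeds the affordable $(d-3)/2$). The paper resolves exactly this obstruction with the ``ghost vertices'' device (Observation~\ref{obs1v}): $1$-vertices are only required to end with charge at least $-1$, and the contradiction is obtained from ${\rm mad}(H[V_1])\ge 3$ rather than from $\sum_v\omega'(v)\ge 0$ over all of $V$. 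You yourself flag that your rules leave $\mu^*(v)\ge d/2-3\ge -1$ at high-degree vertices and that ``the rules must be tuned'' and the bookkeeping at degrees $3,4,5$ ``is the crux''; that tuning is precisely the content of rules (R1)--(R4), the deficient/half-deficient classification, and configurations (C5)--(C13), none of which your proposal supplies. So the submission is a plausible plan in the right genre, but the decisive ideas (the ghost-vertex accounting, the correct minimality order, the multi-edge sum-counting lemma, and a true list of reducible configurations) are missing.
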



\section{Proof of Theorem~\ref{th:summad}}

\subsection{Preliminaries}

Fix an integer $k\geq 6$.
In the following, $n_i(G)$ denotes the number of vertices of degree $i$ in a graph $G$. 
We say a graph $G$ is \emph{smaller} than a graph $H$, $G \prec H$ if 
$(n_k(G),\ldots,n_2(G),n_1(G))$ precedes $(n_k(H),\ldots,n_2(H),n_1(H))$ with respect to the standard lexicographic order.
We say a graph is \emph{minimal} for a property when no smaller graph verifies it.
We shall also call any vertex of degree $d$ ($\geq d$, $\leq d$) in a given graph a \emph{$d$-vertex} (\emph{$d^+$-vertex}, \emph{$d^-$-vertex}, resp.) of this graph.
The same nomenclature shall be used for neighbours as well.\\



\subsection{Structural properties of H}


Suppose $H$ is a minimal graph without isolated edges such that $\Delta(H) \leq k$, and ${\rm mad}(H)<3$ 
and $\chi'_\Sigma(H)>k+1$. In the remaining part of the paper we argument that in fact $H$ cannot exist, and thus prove Theorem~\ref{th:summad}.

In this subsection, we exhibit some structural properties of $H$.
%
%
The following lemma shall be very useful to this end.
Its proof is inspired by the research from~\cite{BP14}. 

\begin{lemma}\label{MartheLemma}
For any finite sets $L_1,\ldots,L_t$ of real numbers with $|L_i|\geq t$ for $i=1,\ldots,t$,
the set $\{x_1+\ldots+x_t:x_1\in L_1,\ldots,x_t\in L_t; x_i\neq x_j~{\rm for}~i\neq j\}$ contains
at least $\sum_{i=1}^t|L_i|-t^2+1$ distinct elements.
\end{lemma}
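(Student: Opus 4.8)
The plan is to prove the lemma by induction on $t$, producing not just a count but an explicit strictly increasing chain of rainbow sums; carrying the chain along is what makes the induction close. For $t=1$ the statement is immediate, the ``chain'' being the elements of $L_1$ listed increasingly (there are $|L_1|=|L_1|-1+1$ of them). So fix $t\ge 2$ and sets $L_1,\dots,L_t$ with $m_i:=|L_i|\ge t$, and write each $L_i=\{a_i^1<a_i^2<\cdots<a_i^{m_i}\}$. The strengthened statement I would carry is: there is a chain $x^{(0)},x^{(1)},\dots,x^{(r)}$ of rainbow selections (one element from each $L_i$, all $t$ values distinct) with $r\ge\sum_i m_i-t^2$, strictly increasing sums, and with $x^{(k+1)}$ obtained from $x^{(k)}$ by replacing a single coordinate by a strictly larger element of the same $L_i$. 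Strict monotonicity makes the $r+1\ge\sum_i m_i-t^2+1$ sums pairwise distinct, which is exactly the bound.

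For the inductive step, apply this to $L_1,\dots,L_{t-1}$, obtaining a chain $\rho_0,\dots,\rho_N$ with $N\ge\sum_{i<t}m_i-(t-1)^2$, consecutive terms differing in one coordinate. Each $\rho_k$ uses $t-1$ distinct values, so it extends to a rainbow selection of $L_1,\dots,L_t$ by appending any of the $\ge m_t-(t-1)$ elements of $L_t$ outside its value set. I would weave the two one-parameter families: start at $\rho_0$ with the smallest admissible element of $L_t$ appended, first push the $L_t$-coordinate up through all its admissible values (a strictly increasing block of $\ge m_t-(t-1)$ selections), and then advance $\rho$ along $\rho_0,\rho_1,\dots$, altering the appended $L_t$-element only when the next $\rho$-step would collide with it. Every move strictly increases one coordinate's value, so the sums along the combined chain are distinct; if the weaving loses only a bounded number of positions, the length is at least $\bigl(\sum_{i<t}m_i-(t-1)^2+1\bigr)+\bigl(m_t-(2t-1)\bigr)=\sum_{i\le t}m_i-t^2+1$.

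The crux --- and the step I expect to fight with --- is that last ``if''. As $\rho$ advances along its chain the $t-1$ values it forbids inside $L_t$ drift, so the appended element of $L_t$ can become illegal, and re-routing it must not break strict monotonicity of the running sum nor cost more than $t^2-t$ positions in total; equivalently, extending by $L_t$ must add at least $m_t-(2t-1)$ new sums rather than the $m_t-1$ one would get for unrestricted real sumsets. This bookkeeping is where the quadratic defect $t^2$ is actually created. A tidy way to run it is to pass to the lattice of rank-vectors $(j_1,\dots,j_t)$ with $1\le j_i\le m_i$: rainbow selections are the vectors at which the chosen values are pairwise distinct, what one wants is a monotone lattice path through this ``rainbow region'' of length $\ge\sum_i(m_i-t)+1$, and one picks its endpoints greedily so that coordinate $i$ starts at rank $\le i$ and ends at rank $\ge m_i-t+i$ (hence the start lies coordinatewise below the end and the path has the right length). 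A barrier of non-rainbow vectors cutting off every monotone path between these endpoints would, by a short argument using the strict orderings of the $L_i$, force a coincidence of values that cannot persist along a whole monotone path --- except in degenerate ``thin'' cases (some $m_i=t$), which are handled separately by choosing the pinned ranks by hand.
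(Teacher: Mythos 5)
There is a genuine gap here, and you have in fact pointed at it yourself: the entire difficulty of the lemma sits in the step you describe as ``the crux I expect to fight with,'' and neither of your two suggested devices for closing it is actually carried out. In the weaving argument, the problem is not merely losing a bounded number of positions: when the next $\rho$-step raises a coordinate from $a$ to $b$ and the appended element of $L_t$ currently equals $b$, you must re-route the $L_t$-coordinate, and since you began the block by pushing it to the \emph{largest} admissible element of $L_t$, every larger element of $L_t$ is blocked by the other coordinates of $\rho_1$; the only escapes may be strictly smaller (e.g.\ the value $a$ itself, which gives net sum change $0$), so the combined move need not increase the running sum. Once strict monotonicity is lost, distinctness of the sums no longer comes for free, and you give no alternative mechanism (nor any bound showing such collisions cost at most $t$ positions). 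The lattice-of-rank-vectors reformulation has the same status: the existence of a monotone path of length $\sum_i(m_i-t)+1$ through the rainbow region between your greedily chosen endpoints \emph{is} the lemma, and the ``barrier forces a coincidence that cannot persist'' step is asserted, not proved, with the thin cases $m_i=t$ deferred as well.

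For comparison, the paper avoids this collision bookkeeping entirely by a preprocessing step rather than an induction: for $i=1,\dots,t-1$ it removes $\min L_i$ (of the current list) from all later lists, then for $i=t,\dots,2$ it removes $\max L_i$ from all earlier lists, losing at most $t-1$ elements per list and producing lists $L'_1,\dots,L'_t$ such that no $L'_j$ contains the minimum of an earlier pruned list or the maximum of a later one. After this, exactly the staircase you envisage --- walk coordinate $t$ from $\min L'_t$ to $\max L'_t$, then coordinate $t-1$, and so on --- is automatically rainbow at every step, the sums strictly increase, and one gets $\sum_i|L'_i|-t+1\ge\sum_i|L_i|-t^2+1$ distinct sums. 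So your monotone-chain picture is the right one, but the missing idea is this pruning of minima and maxima, which is what makes the chain stay inside the rainbow region without any re-routing; if you graft it onto your argument, the induction (and indeed the induction itself) becomes unnecessary.
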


\begin{proof}
We begin by first dynamically modifying the lists $L_1,\ldots,L_t$.
Thus subsequently, for $i=1,2,\ldots,t-1$, we take $\min L_i$
(where every $L_p$, $p\in\{1,\ldots,t\}$ shall always refer to the up-to-date remainder of this list on a given stage of our modifying procedure) and remove
it from all current lists $L_j$ with $j>i$.
Then, subsequently, for $i=t,t-1,\ldots,2$, we find $\max L_i$ and remove
it from all up-to-date lists $L_j$ with $j<i$,
and denote the finally constructed respective lists by $L'_1,\ldots,L'_t$.
As a result at most $t-1$ elements were removed from every list $L_i$
and
for every $i<j<l$, $L'_j$ contains neither $\min L'_i$
nor $\max L'_l$.
Let $L'_i=\{c_{i,1},c_{i,2},\ldots,c_{i,l_i}\}$, where $c_{i,1}<c_{i,2}<\ldots<c_{i,l_i}$, for $i=1,\ldots,t$.
Then it is straightforward to see that the following $\sum_{i=1}^t|L'_i|-t+1\geq \sum_{i=1}^t(|L_i|-(t-1))-t+1 =\sum_{i=1}^t|L_i|-t^2+1$ sums are distinct and
each consists of $t$ pairwise distinct integers:
\begin{eqnarray*}
&&c_{1,1}+c_{2,1}+\ldots+c_{t-2,1}+c_{t-1,1}+c_{t,1}\\
&<&c_{1,1}+c_{2,1}+\ldots+c_{t-2,1}+c_{t-1,1}+c_{t,2}\\
&&\ldots\\
&<&c_{1,1}+c_{2,1}+\ldots+c_{t-2,1}+c_{t-1,1}+c_{t,l_{t}}\\
&<&c_{1,1}+c_{2,1}+\ldots+c_{t-2,1}+c_{t-1,2}+c_{t,l_{t}}\\
&&\ldots\\
&<&c_{1,l_1}+c_{2,l_2}+\ldots+c_{t-2,l_{t-2}}+c_{t-1,l_{t-1}}+c_{t,l_{t}}.
\end{eqnarray*}
$\blacksquare$
\end{proof}


A $2$-vertex or a $3$-vertex is called \emph{bad} if it is adjacent to a vertex of degree $2$.
Otherwise these are called \emph{good}. A vertex is called \emph{deficient} if it is a $1$-vertex or a bad $2$-vertex,
while a vertex is referred to as \emph{half-deficient} if it is a good $2$-vertex or a bad $3$-vertex.


\begin{claim}\label{claimstructure}
The graph $H$ does not contain any of:

\begin{enumerate}
\item[(C1)] \label{c1} a $1$-vertex $v$ adjacent to a $(\frac{k}{2}+1)^-$-vertex $u$;
\item[(C2)] \label{c2} a $2$-vertex $v$ adjacent to a $(\frac{k+1}{2})^-$-vertex $u$ and to a $(\frac{k}{2})^-$-vertex $w$, $u\neq w$;
\item[(C3)] \label{c3} a $3$-vertex $v$ adjacent to a $(\frac{k}{2})^-$-vertex $u$ and to a $2$-vertex $w$, $u\neq w$;
\item[(C4)] \label{c4} a triangle $uvw$ with $d(u)=2=d(w)$;
\item[(C5)] \label{c5} a vertex $v$ adjacent to a $1$-vertex $u$ and to a bad $2$-vertex $w$;
\item[(C6)] \label{c6} a vertex $v$ adjacent with two bad $2$-vertices $u$ and $w$;
\item[(C7)] \label{c7} a vertex $v$ adjacent with two $1$-vertices $u_1,u_2$ and to a half-deficient vertex $w$;
\item[(C8)] \label{c8} a vertex $v$ of degree $d\geq 3$ adjacent to $d-2$ vertices $u_1,\ldots,u_{d-2}$ of degree $1$;
\item[(C9)] \label{c9} a vertex $v$ of degree $d\leq \frac{2}{3}k$ 
adjacent with a bad $2$-vertex $u$ and to a half-deficient vertex $w$;
\item[(C10)] \label{c10} a vertex $v$ of degree $d$ adjacent to exactly one bad $2$-vertex $u$, at least one half-deficient vertex and to at most $k-d$ vertices which are neither deficient nor half-deficient;
\item[(C11)] \label{c11} a vertex $v$ of degree $d$ adjacent to exactly one $1$-vertex $u$
and to at most $k-d+1$ vertices which are neither deficient nor half-deficient;
\item[(C12)] \label{c12} a $5$-vertex $v$ adjacent to $5$ half-deficient vertices $u_1,...,u_5$;
\item[(C13)] \label{c13} a $4$-vertex $v$ adjacent to at least $3$ half-deficient vertices $u_1,u_2,u_3$.
\end{enumerate}
\end{claim}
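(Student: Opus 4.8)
The plan is to prove Claim~\ref{claimstructure} by a discharging-style minimality argument: for each forbidden configuration (C1)--(C13), assume $H$ contains it, delete or uncolour a small set of edges to obtain a graph $H'$ that is strictly smaller in the order $\prec$, invoke minimality to get an nsd $(k+1)$-colouring of $H'$, and then extend this colouring back to $H$, reaching a contradiction with $\chi'_\Sigma(H)>k+1$. The crucial point for each case is to check two things: (i) that the reduced graph $H'$ still satisfies the hypotheses ($\Delta(H')\le k$, $\mathrm{mad}(H')<3$, and no isolated edge), so that minimality applies, and (ii) that $H'\prec H$, which will typically follow because we either remove a low-degree vertex or lower the degree of a vertex without creating new small-degree vertices higher in the lexicographic count. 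When deleting a $2$-vertex or $1$-vertex one must be slightly careful that no isolated edge is created; in the handful of cases where that could happen (e.g.\ near (C4) and the triangle configurations) one instead removes a whole small subgraph or argues the ambient graph is large enough.

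The heart of the argument is the extension step, and here Lemma~\ref{MartheLemma} does the main work. For a vertex $v$ of degree $d$ whose incident edges are (partially) uncoloured, each such edge $e=vx_i$ has a list $L_i$ of colours available from $\{1,\dots,k+1\}$ after forbidding the colours already used around $v$ and around $x_i$; since at most $d-1$ colours are forbidden at each end and $k+1\ge d+1$, one gets $|L_i|\ge (k+1)-2(d-1)$ or better, and in the configurations at hand the degree bounds (such as $d\le \frac23 k$, or the explicit $(\frac k2+1)^-$, $(\frac{k+1}{2})^-$, $(\frac k2)^-$ thresholds) are precisely calibrated so that the lists are long enough. One then needs the $d$ new colours around $v$ to be pairwise distinct (properness at $v$) and to choose their sum so as to avoid the finitely many ``bad'' sums that would create a conflict on an edge incident to $v$; Lemma~\ref{MartheLemma} guarantees that the set of achievable sums $x_1+\dots+x_d$ with the $x_i$ pairwise distinct has size at least $\sum|L_i|-d^2+1$, which must be shown to exceed the number of forbidden sums (one per neighbour, roughly). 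The role of the notions \emph{bad}, \emph{good}, \emph{deficient}, \emph{half-deficient} is exactly to control how many neighbours of $v$ impose a sum constraint and how tight that constraint is: a $1$-vertex or bad $2$-vertex has a nearly-determined sum and hence is cheap to avoid, whereas a high-degree neighbour whose other edges are free imposes no real constraint at all.

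Concretely, I would organise the thirteen cases into groups. Cases (C1)--(C4) are the ``local forcing'' cases: a low-degree vertex wedged between two other low-degree vertices; here deleting the middle edge(s) or the middle vertex and re-colouring greedily with the list/sum count suffices, and (C4) additionally uses that a $2$-vertex on a triangle has both its edges constrained by the same two other vertices. Cases (C5)--(C8) and (C11) concern a vertex $v$ with several pendant $1$-vertices or bad $2$-vertices: one uncolours all edges from $v$ to its deficient neighbours (plus the far edges of the bad $2$-vertices), which drops the degree count enough to get $H'\prec H$, then re-colours using that the pendant structure leaves many colours free and the sum at $v$ is the only quantity to juggle. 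Cases (C9), (C10), (C12), (C13) are the genuinely delicate ones and I expect (C10)--(C13) to be the main obstacle: there $v$ has a mix of one bad $2$-vertex, some half-deficient neighbours, and a bounded number of ``ordinary'' neighbours, and one must uncolour the edges to the deficient/half-deficient part, extend, and then verify that the number of available sums from Lemma~\ref{MartheLemma} strictly beats the number of conflicts — this is a careful but routine inequality once the list sizes are pinned down, except that for half-deficient neighbours one may also need to re-examine and possibly re-choose the colour on their \emph{other} incident edge, so the uncoloured edge set and the induced lists have to be set up with some care. In all cases the final contradiction is the same: an nsd $(k+1)$-colouring of $H$ exists, contradicting the choice of $H$.
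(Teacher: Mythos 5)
Your skeleton (minimal counterexample, reduce, colour by minimality, extend via Lemma~\ref{MartheLemma}) is the paper's skeleton, but two mechanisms that the paper cannot do without are absent, and without them several of your case groups do not close. First, in (C5)--(C7) the vertex $v$ may have degree $d$ close to $k$, and nothing bounds how many of its neighbours have fixed sums; if you uncolour the edges from $v$ to its deficient neighbours and recolour them, each such edge faces up to $d-1$ properness constraints plus up to $d-1$ sum constraints at $v$, roughly $2k-2>k+1$ in total, and the count from Lemma~\ref{MartheLemma} does not beat it (for (C5), for instance, you get at least $2(k-d+3)-3$ sums at $v$ against up to $d-2$ forbidden ones, which fails for $d$ near $k$). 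The paper never touches $s(v)$ in these cases: it only \emph{permutes} the colours of already coloured edges incident with $v$ (which preserves $s(v)$ and properness) and then recolours the far edge $ww'$ or $uu'$. Making that swap work requires reductions that are not of the form ``delete or uncolour a few edges'': in (C7) the half-deficient vertex $w$ is split into two vertices, and in (C6) one takes $H'=H+u'w'-\{uu',ww'\}$, where the added edge $u'w'$ forces $c(u'u'')\neq c(w'w'')$ in the minimal colouring --- exactly the condition the swap needs, since $s(u)\neq s(u')$ is equivalent to $c(vu)\neq c(u'u'')$ and cannot be influenced by the colour chosen for $uu'$ at all. Your explicit framing of every reduction as edge deletion/uncolouring misses this, and the naive variant genuinely breaks.

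Second, in (C10), (C11) (and similarly (C12)) the constraint count is tight: e.g.\ in (C11) one has $(d-1)+(k-d+1)=k$ constraints against $k+1$ colours, so there is no slack to also avoid conflicts between $v$ and its deficient or half-deficient neighbours, and a good $2$-neighbour cannot be fixed by recolouring its far edge (that edge is incident with a vertex of possibly large degree, which imposes up to $2k$ further constraints). The paper's resolution is the automatic-distinction inequality: earlier configurations ((C1), (C9)) force $d(v)\geq\frac{k+3}{2}$ or $d(v)\geq\frac{2k+1}{3}$, whence $1+2+\cdots+(d-1)>k+1$ and $s(v)$ exceeds the sum of any $2$-neighbour outright; only bad $3$-neighbours remain, and those are handled by re-choosing the colour on their edge to a degree-$2$ vertex. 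You mention the far-edge adjustment for half-deficient neighbours, but not this inequality, and your remark that a bad $2$-vertex ``has a nearly-determined sum and hence is cheap to avoid'' is not a substitute, because in these cases there is no spare colour with which to avoid anything. Supplying the swap trick, the two non-deletion reductions, and the inequality $1+2+\cdots+(d-1)>k+1$ (together with the fact that each case may invoke the earlier ones) is what turns your outline into the paper's proof.
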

\medskip


\begin{figure}[ht]
\begin{center}
\centering
\begin{tikzpicture}[scale=0.8,auto]
	\tikzstyle{w}=[draw,circle,fill=white,minimum size=5pt,inner sep=0pt]
	\tikzstyle{b}=[draw,circle,fill=black,minimum size=5pt,inner sep=0pt]
	\tikzstyle{t}=[rectangle,minimum size=5pt,inner sep=0pt]

	 \tikzstyle{whitenode}=[draw,ellipse,fill=white,minimum size=9pt,inner sep=0pt]
	 \tikzstyle{blacknode}=[draw,circle,fill=black,minimum size=9pt,inner sep=0pt]
	 \tikzstyle{texte}=[minimum size=9pt,inner sep=0pt]


\draw (0,0) node[b] (v) [label=left:$v$] {} 
-- ++(0:3cm) node[whitenode] (u) [label=above:$u$] {$\left(\frac{k}{2}+1\right)^-$};




\draw (1.7,-1.5) node[t] (t1) {$(C_1)$};

\draw (6,0) node[whitenode] (b) [label=above:$u$] {$\left(\frac{k+1}{2}\right)^-$}
--++ (0:1.75cm) node[b] (v1) [label=above:$v$] {}
--++ (0:1.5cm) node[whitenode] (b) [label=above:$w$] {$\left(\frac{k}{2}\right)^-$};

\draw (7.8,-1.5) node[t] (t1) {$(C_2)$};



\draw (11,0) node[w] (u) {}
--++ (0:1.2) node[b] (v1) [label=below:$w$] {}
--++ (0:1.2) node[b] (v2) [label=below:$v$] {}
--++ (0:1.2) node[whitenode] (v3) [label=above:$u$] {$\left(\frac{k}{2}\right)^-$};

\draw (13.4,1.2) node[w] (v4) [label=above:$v_1$] {};


\draw (v2) -- (v4);

\draw (13,-1.5) node[t] (t1) {$(C_3)$};

\draw (16.5,0) node[b] (u) [label=below:$u$] {}
--++ (60:1.5cm) node[w] (v1) [label=above:$v$] {}
--++ (-60:1.5cm) node[b] (v2) [label=below:$w$] {};

\draw (v2) -- (u);

\draw (17.3,-1.5) node[t] (t1) {$(C_4)$};


\draw (0,-3.5) node[w] [label=left:$v$] (u) {}
--++ (30:1cm) node[b] [label=above:$w$] (v1) {}
--++ (0:1cm) node[b] (v2) [label=above:$w'$] {}
--++ (0:1cm) node[w] (v3) {};

\draw (u)
--++ (-30:1cm) node[b] [label=right:$u$] (v4) {};

\draw (1.7,-5) node[t] (t1) {$(C_5)$};

\draw (6.1,-3.5) node[w] (u) [label=left:$v$] {}
--++ (30:1cm) node[b] (v1) [label=above:$u$] {}
--++ (0:1cm) node[b] (v2) [label=above:$u'$] {}
--++ (0:1cm) node[w] (v3) {};

\draw (u)
--++ (-30:1cm) node[b] (v4) [label=above:$w$] {}
--++ (0:1cm) node[b] (v5)  [label=above:$w'$] {}
--++ (0:1cm) node[w] (v6) {};

\draw (7.8,-5) node[t] (t1) {$(C_6)$};

\draw (11.3,-3.5) node[w] (u) [label=left:$v$] {}
--++ (30:1.5cm) node[w] (v1) [label=above:$w$] {\large $h$}
--++ (0:1.5cm) node[w] (v2) [label=above:$w'$] {\small $3^+$};

\draw (u)
--++ (0:1.3cm) node[b] (v3) [label=right:$u_1$] {};

\draw (u)
--++ (-30:1.5cm) node[b] [label=right:$u_2$] (v4) {};

\draw (13,-5) node[t] (t1) {$(C_7)$};

\draw (16.8,-3.5) node[w] (u) [label=left:$v$] {\small $3^+$};

\draw (u)
--++ (30:1.5cm) node[b] [label=right:$u_1$] (v1) {};

\draw (u)
--++ (-30:1.5cm) node[b] [label=right:$u_{d-2}$] (v3) {};


\draw (v1) edge [bend left,loosely dotted,thick] node {} (v3);




\draw (17.3,-5) node[t] (t1) {$(C_8)$};


\draw (0,-7) node[whitenode] (u) [label=left:$v$] {\small $\left(\frac{2k}{3}\right)^-$}
--++ (30:1.5cm) node[w] (v1)  [label=above:$w$] {\large $h$}
--++ (0:1cm) node[w] (v2)  [label=above:$w'$] {\small $3^+$}
--++ (0:1cm) node[w] (v3) {};

\draw (u)
--++ (-30:1.5cm) node[b] (v4)  [label=above:$u$] {}
--++ (0:1cm) node[b] (v5)  [label=above:$u'$] {}
--++ (0:1cm) node[w] (v6) {};

\draw (1.7,-8.5) node[t] (t1) {$(C_9)$};

\draw (8.1,-7) node[whitenode] (u) [label=above :$v$] {\small $d$}
--++ (130:1.5cm) node[w] (v1) {\large $h$};

\draw (u)
--++ (0:-1.1cm) node[b] (v2)  [label=above:$u$] {}
--++ (0:-1.1cm) node[b] (v3)  [label=above:$u'$] {}
--++ (0:-1.1cm) node[w] (v4) {};

\draw (u)
--++ (30:1.5cm) node[w] [label=right:$u_1$] (v5) {};

\draw (u)
--++ (-30:1.5cm) node[w] [label=right:$u_{k-d}$] (v6)  {};

\draw (v5) edge [bend left,loosely dotted,thick] node {} (v6);

\draw (7.8,-8.5) node[t] (t1) {$(C_{10})$};

\draw (12.8,-7) node[whitenode] (u) [label=above :$v$] {\small $d$}
--++ (0:-1.3cm) node[b] (v1)  [label=above:$u$] {};

\draw (u)
--++ (30:1.5cm) node[w] (v2) [label=right:$u_1$] {};

\draw (u)
--++ (-30:1.5cm) node[w] (v3) [label=right:$u_{k-d+1}$] {};

\draw (v2) edge [bend left,loosely dotted,thick] node {} (v3);

\draw (13,-8.5) node[t] (t1) {$(C_{11})$};

\draw (16.8,-7) node[b] (u) [label=left:$v$] {};

\draw (u)
--++ (30:1.5cm) node[w] [label=right:$u_1$] (v1) {\large $h$};

\draw (u)
--++ (-30:1.5cm) node[w] [label=right:$u_5$] (v3)  {\large $h$};


\draw (v1) edge [bend left,loosely dotted,thick] node {} (v3);




\draw (17.3,-8.5) node[t] (t1) {$(C_{12})$};


\draw (1.5,-10.5) node[b] [label=above:$v$] (u) {}
--++ (30:1.5cm) node[w] [label=right:$u_1$] (v1) {\large $h$};

\draw(u)
--++ (0:1.3cm) node[w] [label=right:$u_2$] (v2) {\large $h$};

\draw(u)
--++ (-30:1.5cm) node[w] [label=right:$u_3$] (v3) {\large $h$};

\draw(u)
--++ (0:-1.5cm) node[w] (v4) {};

\draw (1.7,-12) node[t] (t1) {$(C_{13})$};

\end{tikzpicture}
\caption{Forbidden configurations in $H$ (where solid vertices have degrees as presented in the figure, hollow vertices may have additional edges and may coincide with other vertices, while the label `h' indicates a half deficient vertex).}\label{FCG_fig}
\end{center}
\end{figure}
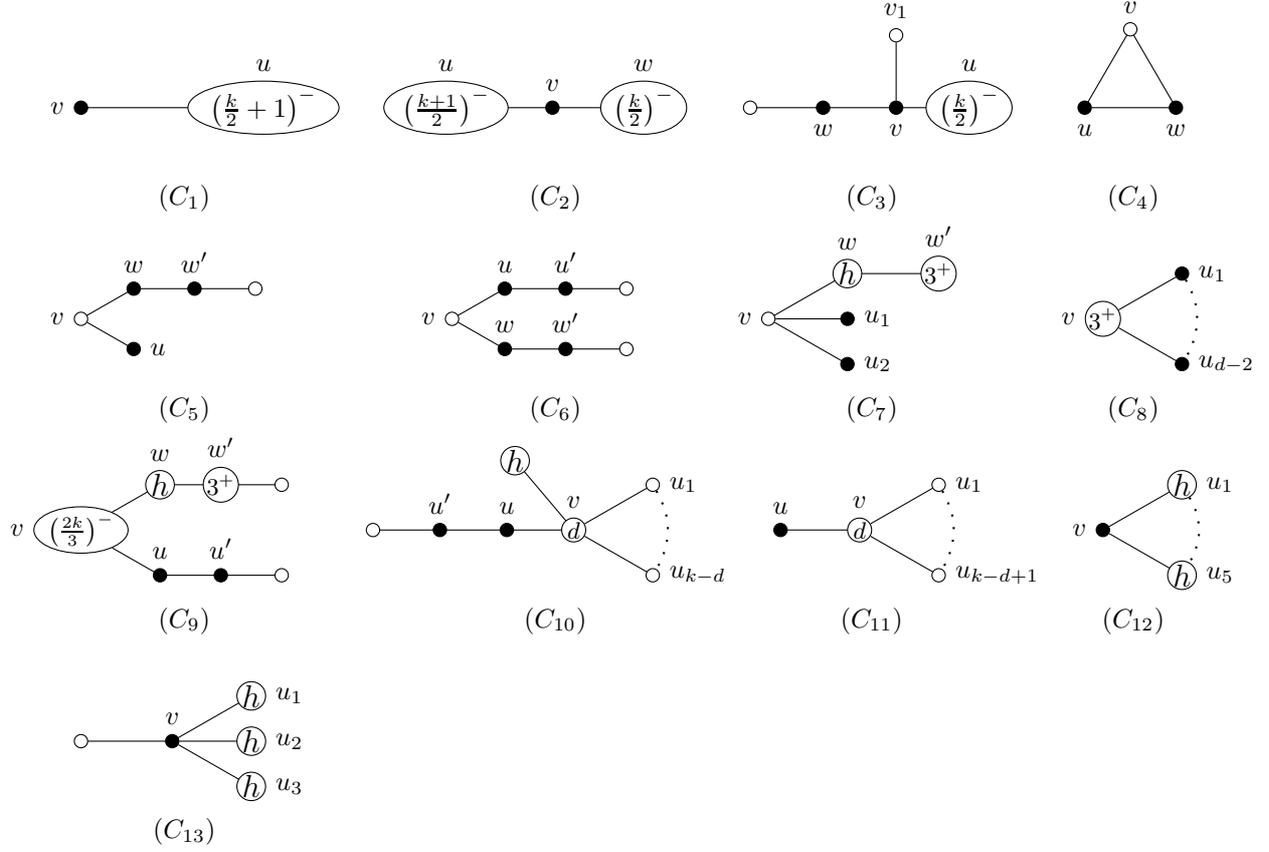

\begin{proof}
We shall argument `reducibility' of each of these 13 configurations separately,
following a similar pattern of reasoning.
I.e., we shall first suppose by contradiction that a given configuration exists in $H$.
Then we shall consider a graph $H'$ smaller than $H$ with $\Delta(H')\leq k$ and $\rm{mad}(H')<3$
(usually guaranteing these properties by constructing $H'$ simply via deleting some edges or vertices from $H$),
and \emph{colour} it \emph{by minimality},
what shall mean from now on that we choose any nsd $(k+1)$-colouring for every component of $H'$ of order at least $3$
(such colouring exists as this component is obviously smaller than $H$ then; cf. the definition of $H$) and fix arbitrarily a colour in $\{1,2,\ldots,k+1\}$ for every isolated edge of $H'$.
%
Finally, in each case, we shall obtain a contradiction by extending the colouring chosen to an nsd $(k+1)$-coloring of the entire $H$.\\

First note that a vertex of degree $d$ shall certainly be sum-distinguished from its $2$-neighbour if
\begin{equation}\label{distinguished2vertices_ineq}
d > \frac{1}{2}(\sqrt{8k+9}+1). 
\end{equation}
Indeed, this inequality is equivalent to $1+2+\ldots+{d-1}>k+1$
(while a colour of an edge joining two vertices is counted in the sums of the both vertices).
Note that this holds {\it e.g.} for 
$d\geq \frac{k+3}{2}$ and for $d\geq\frac{2k+1}{3}$, as $k\geq 6$.
Obviously, a $1$-vertex is always sum-distinguished from its neighbour in $H$.

\begin{enumerate}
\item Suppose there exists a $1$-vertex $v$ adjacent to a $(\frac{k}{2}+1)^-$-vertex $u$. Colour $H'=H-v$ by minimality.
     In order to colour $uv$ then so that the $(k+1)$-colouring of $H$ obtained is proper we have to avoid at most $\frac{k}{2}$ colours,
     and possibly at most $\frac{k}2$ more colours to ensure the sum-distinction (of $u$ from its neighbours other than $v$). Hence, we have at least one colour left to extend the colouring
     to an nsd $(k+1)$-colouring of $H$, a contradiction.\\

\item Assume there exists a $2$-vertex $v$ adjacent to a $(\frac{k+1}{2})^-$-vertex $u$ and to a $(\frac{k}{2})^-$-vertex $w$, $u\neq w$.
     Colour $H'=H-v$ by minimality. Then we colour $vu$ first so that the (partial) $(k+1)$-colouring obtained is proper (at most $\frac{k+1}{2}-1=\frac{k-1}{2}$ forbidden colours), $s(v)\neq s(w)$ ($1$ constraint), and $u$ is sum-distinguished from its (at most $\frac{k-1}{2}$) neighbours with fixed sums, hence we have at least one colour available for this aim.
Finally colour $vw$ so that the colouring is proper (at most $(\frac{k}{2}-1)+1=\frac{k}{2}$ constraints) and $v$ and $w$ are sum-distinguished from their neighbours other than $v$ and $w$ (again $\frac{k}{2}$ constraints). Hence, we obtain a contradiction, as we have at least one colour left to extend the colouring.\\

\item Suppose there exists a $3$-vertex $v$ adjacent to a $(\frac{k}{2})^-$-vertex $u$ and to a $2$-vertex $w$, $u\neq w$.
     Denote by $v_1$ the third neighbour of $v$ distinct from $u$ and $w$. Colour $H'=H-\{uv,vw\}$ by minimality.
     Then colour $uv$ so that the colouring is proper (at most $(\frac{k}{2}-1)+1=\frac{k}{2}$ constraints), $s(v)\neq s(w)$ ($1$ constraint), and $u$ is sum-distinguished from its (at most $\frac{k}{2}-1$) neighbours with fixed sums. Finally colour $vw$ so that the colouring is proper and $v$ and $w$ are sum-distinguished from their neighbours other than $v$ and $w$. We can extend the colouring, since we have at least $7$ colours available (where $7 \geq 2\times 3=6$), a contradiction.\\

\item Assume there exists a triangle $uvw$ with $d(u)=2=d(w)$. Colour $H'=H-uw$ by minimality. Note that $s(u)\neq s(w)$ then, as $uv$ and $wv$ must be coloured differently. Then colour $uw$ so that the colouring is proper ($2$ constraints), $s(u)\neq s(v)$ ($1$ constraint) and $s(w)\neq s(v)$ ($1$ constraint). We can extend the colouring, since we have more than $4$ colours available, a contradiction.\\

\item Suppose there exists a vertex $v$ adjacent to a $1$-vertex $u$ and to a bad $2$-vertex $w$. Denote by $w'$ the neighbour of degree $2$ of $w$ ($w'\neq v$). Colour $H'=H-ww'$ by minimality. Next switch the colours of $vu$ and $vw$ if necessary so that $s(w)\neq s(w')$. Then we easily choose a colour for $ww'$ so that the colouring obtained is proper and no sum conflict arises. Hence, the colouring is extended, a contradiction.\\

\item Assume there exists a vertex $v$ adjacent with two bad $2$-vertices $u$ and $w$. Let $u'$ (resp. $w'$) be the neighbour of degree $2$ of $u$ (resp. $w$), $u',w'\neq v$. By $(C2)$, $d(v)\geq 4$ and $u'\neq w'$. Denote by $u''$ (resp. $w''$) the second neighbour of $u'$ (resp. $w'$) distinct from $u$ (resp. $w$). By $(C4)$, $w'\neq u$ and $u'\neq w$, while by $(C2)$, $u'$ and $w'$ cannot be adjacent in $H$. Consider $H'=H+u'w'-\{uu',ww'\}$,
and note that $H'\prec H$, as we have decreased the number of vertices of degree $2$, creating no new vertices of larger degrees at the same time.
 It also holds that ${\rm mad}(H')<3$ (as otherwise there would have to exist a subgraph $H''$ of $H$ with ${\rm mad}(H'')\geq 3$, \emph{e.g.}, $H''=H'-\{u',w'\}$, a contradiction).
 Consequently, we may colour $H'$ by minimality. Hence $vu$ and $vw$ are coloured differently, and the same holds for $u'u''$ and $w'w''$ (if they were coloured the same, there would be a conflict between $u'$ and $w'$ in $H'$). Then we switch the colours of $vu$ and $vw$ if necessary, so that $vu$ (resp. $vw$) and $u'u''$ (resp. $w'w''$) are coloured differently, in order to ensure the sum-distinction between $u$, $u'$ and $w$, $w'$ in $H$ (where the edge $u'w'$ is not taken into account anymore, as it does not appear in $H$). It then suffices to colour the edges $uu'$ and $ww'$ with colours different from these of their respective adjacent edges and such that $s(u),s(w) \neq s(v)$, $s(u')\neq s(u'')$ and $s(w')\neq s(w'')$. This is possible as there are at least $3$ available colours left for this aim in both cases, a contradiction.\\

\item Suppose there is a vertex $v$ adjacent with two $1$-vertices $u_1,u_2$ and to a half-deficient vertex $w$. Let $w'$ be the neighbour of $w$ of degree greater than $2$ other than $v$ (cf. $(C1)$ and $(C3)$). We consider two cases:
\begin{itemize}
\item First suppose $w$ is a good $2$-vertex. We create $H'$ of $H$ by splitting the vertex $w$ in two $1$-vertices $w_1$ and $w_2$ such that $w_1$ is adjacent to $v$ and $w_2$ is adjacent to $w'$. Obviously, $H'\prec H$ 
    and ${\rm mad}(H') \le {\rm mad}(H)$. Hence, we may colour $H'$ by minimality.
    Then we switch the colour of $vw_1$ with the colour of $vu_1$ or $vu_2$ if necessary so that the colour of $vw_1$ is distinct from the colour of $w_2w'$ and $s(w)\neq s(w')$ after identifying back $w_1$ with $w_2$. Since by $(C1)$, $d(v)\geq \frac{k+3}{2}$, then by (\ref{distinguished2vertices_ineq}), $s(v)\neq s(w)$, hence we obtain an nsd $(k+1)$-colouring of $H$, a contradiction.

\item Assume now that $w$ is a bad $3$-vertex. Let $w''$ be the third neighbour of $w$ ({\it i.e.}, $w''\neq v$, $w''\neq w'$ and $d(w'')=2$). We split $w$ into a $1$-vertex $w_1$ adjacent to $v$ and a $2$-vertex $w_2$ adjacent to $w'$ and $w''$. One can observe that the obtained new graph $H'$ is smaller than $H$ (because one $3$-vertex has been removed) and ${\rm mad}(H') \le {\rm mad}(H)$. Hence, we may colour $H'$ by minimality. Then we switch the colour of $vw_1$ with the colour of $vu_1$ or $vu_2$ if necessary so that the colour of $vw_1$ is distinct from the colour of $w_2w'$ and $s(w)\neq s(w'')$ after identifying back $w_1$ with $w_2$. If there are still some colour or sum conflicts in $H$, we change the colour of $ww''$ to eliminate all of these. This is feasible as we have more than $6$ colours available.
    Hence, we can extend the colouring, a contradiction.\\

\end{itemize}



\item Assume there is a vertex $v$ of degree $d\geq 3$ adjacent to $d-2$ vertices $u_1,\ldots,u_{d-2}$ of degree $1$.
By $(C1)$, $d\geq 5$. Colour $H'=H-\{u_1,\ldots,u_{d-2}\}$ by minimality. Then every edge $vu_i$ for $i \in \{1,\ldots,d-2\}$ has $2$ forbidden colours, {\it i.e.}, $(k+1)-2=k-1$ available colours left. By Lemma~\ref{MartheLemma}, we may complete the proper colouring of $H$ in different ways, obtaining at least $(d-2)(k-1)-(d-2)^2+1=(k-d+1)(d-2)+1\geq (d-2)+1\geq 4$ distinct sums for $v$. Since $v$ has at most two neighbours of degree greater than $1$, then, at least one of these $4$ sums is distinct from the sums of these at most two neighbours. Thus again we can extend the colouring, a contradiction.\\

\item Suppose there is a vertex $v$ of degree $d\leq \frac{2}{3}k$ adjacent to a bad $2$-vertex $u$ and to a half-deficient vertex $w$.
By $(C2)$, $d\geq 4$. Denote by $u'$ the neighbour of degree $2$ of $u$. Denote by $w'$ the neighbour of degree greater than $2$ of $w$ distinct from $v$. Note that by $(C2)$ and $(C4)$, neither $u$ nor $u'$ is adjacent with $w$. If $w$ is a bad $3$-vertex, let $w''$ be its neighbour of degree $2$. Colour the graph $H'=H-\{vu,uu',vw\}$ by minimality. In the case when $w$ is a bad $3$-vertex we uncolour the edge $ww''$. Regardless if $d(w)=2$ or $d(w)=3$, for $vw$ there are at most $(d-2)+1$ forbidden colours of the edges adjacent with it and $1$ more constraint to guarantee $s(w)\neq s(w'')$ (if $d(w)=3$) or $s(w)\neq s(w')$ (if $d(w)=2$). Analogously, as the colour of $uu'$ is not yet fixed, there are $d-2$ colours of the edges incident with $v$ forbidden for $uv$ and at most two more so that $s(u)\neq s(u')$ and $s(v)\neq s(w)$. Therefore we have at least $(k+1)-d\geq \frac{k}{3}+1$ colours available for both, $vu$ and $vw$, thus by Lemma~\ref{MartheLemma} we may extend our proper colouring on these two edges obtaining at least $2(\frac{k}{3}+1)-3=\frac{2}{3}k-1\geq (d-2)+1$ distinct sums for $v$, one of which is different from the sums of all neighbours of $v$ other than $u$ and $w$. Then we easily complete the construction of an nsd $(k+1)$-colouring of $H$ choosing a right colour for $uu'$ and one for $ww''$ (if $d(w)=3$) as in $(C7)$. Thus we obtain an extension of the colouring to the whole $H$, a contradiction.\\


\item  Assume there is a vertex $v$ of degree $d$ adjacent to exactly one bad $2$-vertex $u$, at least one half-deficient vertex and to at most $k-d$ vertices which are neither deficient nor half-deficient. 
Denote by $u'$ the neighbour of $u$ of degree $2$. Colour $H'= H-\{vu,uu'\}$ by minimality. Then, first we choose a colour for $vu$ so that the colouring is proper ($d-1$ constraints), $s(u)\neq s(u')$ ($1$ constraint) and the sum of $v$ is distinct from the sum of every its neighbour which is neither deficient nor half-deficient (there are at most $k-d$ of these). This is feasible, as we have altogether at most $(d-1)+1+(k-d)=k$ constraints. Subsequently, we choose an appropriate colour for $uu'$ (avoiding at most $4$ constraints). Recall now that $v$ is adjacent to at least one half-deficient vertex. By $(C9)$, $d\geq\frac{2k+1}{3}$, and thus $v$ is sum-distinguished from all its $2$-neighbours by (\ref{distinguished2vertices_ineq}). Hence, $v$ can only be in conflict with its adjacent half-deficient vertices which are bad $3$-vertices. For every such vertex we can however similarly as above adjust the colour on the edge joining it with the vertex of degree $2$ in order to eliminate this potential conflict. Finally we obtain an extension of the colouring to the whole $H$, a contradiction.\\

\item Suppose there is a vertex $v$ of degree $d$ adjacent to exactly one $1$-vertex $u$
and to at most $k-d+1$ vertices which are neither deficient nor half-deficient.
Colour the graph $H'=H-u$ by minimality. Then, we choose a colour for $vu$ so that the colouring is proper ($d-1$ constraints) and the sum of $v$ is distinct from the sum of every its neighbour which is neither deficient nor half-deficient (there are at most $k-d+1$ of these). This is feasible, as we have altogether at most $(d-1)+(k-d+1)=k$ constraints. Since by $(C1)$, $d\geq\frac{k+3}{2}$, then $v$ is sum-distinguished from all its $2$-neighbours by (\ref{distinguished2vertices_ineq}), and hence can only be in conflict with its adjacent bad $3$-vertices. For every such vertex we can however similarly as above adjust the colour on the edge joining it with the vertex of degree $2$ in order to eliminate this potential conflict. Thus we obtain an extension of the colouring to the whole $H$, a contradiction.\\

\item Assume there is a $5$-vertex $v$ adjacent to $5$ half-deficient vertices $u_1,\ldots,u_5$. Colour the graph $H'=H-\{vu_1,vu_2\}$ by minimality. Without loss of generality we may assume that $u_1,u_2$ are not adjacent in $H$. In the obtained colouring, for every $u_i$ which is a bad $3$-vertex we uncolour an edge joining it with a vertex of degree $2$, $i=1,\ldots,5$. If $k\leq 8$, we then first colour $vu_2$ properly ($4$ constraints) so that $u_2$ is sum-distinguished ($1$ constraint) from its neighbour other than $v$
    which, if possible (\emph{i.e.} in the case when $d(u_2)=3$) is a $2$-vertex.
    Then we colour $vu_1$ properly ($5$ constraints) so that $u_1$ is sum-distinguished ($1$ constraint) from its neighbour other than $v$
    which, if possible (\emph{i.e.} in the case when $d(u_1)=3$) is a $2$-vertex.
    As $k\leq 8$, by (\ref{distinguished2vertices_ineq}), $v$ is sum-distinguished from all its neighbours of degree $2$. In order to distinguish it from bad $3$-neighbours, we subsequently choose new colours for the formerly uncoloured edges incident with them, which is possible as $k+1>6$. If on the other hand $k\geq 9$, we have at most $4$ colours blocked for each of $vu_1$ and $vu_2$ by the colours of their respective adjacent edges and further $2$ for each $vu_i$, $i=1,2$, to avoid $s(v)=s(u_{3-i})$, resp., and the same sum at $u_i$ and its neighbour of the least degree other than $v$. 
    Thus both edges $vu_1$ and $vu_2$ have at least $k+1-4-2\geq 4$ colours available left, hence by Lemma~\ref{MartheLemma}, we may properly extend the colouring to $vu_1$ and $vu_2$ obtaining at least $2\times 4-3 = 5$ different sums at $v$. We choose one of these extensions so that $s(v)\neq s(u_3),s(v)\neq s(u_4),s(v)\neq s(u_5)$. At the end, if necessary, we analogously as in the previous case adjust the colours of uncoloured edges incident with bad $3$-vertices adjacent with $v$. Thus we obtain an extension of the colouring to the whole $H$, a contradiction.\\

\item 	Suppose there exists a $4$-vertex $v$ adjacent to at least $3$ half-deficient vertices $u_1,u_2,u_3$.
If $u_1$ is adjacent to $u_2$, by $(C3)$ and $(C4)$ it means that one of these vertices, say $u_2$ is a bad $3$-vertex, and the other ($u_1$) is a good $2$-vertex. Then we colour $H'=H-\{u_1v,u_1u_2\}$ by minimality. Next we extend this proper colouring to $vu_1$ (at most $3$ forbidden colours of the adjacent edges) so that $s(u_1)\neq s(u_2)$ and $v$ is sum-distinguished from its remaining two neighbours (other than $u_1$ and $u_2$). Finally we choose a colour for $u_1u_2$ avoiding the colours of its three adjacent edges and creating no sum conflicts (additional at most $3$ constraints), a contradiction. By symmetry, we may thus assume that $u_1,u_2,u_3$ form an independent set in $H$, and denote by $w$ the remaining neighbour of $v$.
    \begin{itemize}
    \item
    If $k=6$, we colour $H-\{vu_1,vu_2,vu_3\}$ by minimality, and for every $u_i$ which is a bad $3$-vertex we uncolour an edge joining it with a vertex of degree $2$, for $i=1,2,3$. For every $vu_i$ we then have at most $2$ colours blocked by the colours of its adjacent edges and further at most one to avoid a sum-conflict between $u_i$ ($i=1,2,3$) and its neighbour of the least degree other than $v$. We thus have at least $(k+1)-2-1=4$ available colours left for every $vu_i$ ($i=1,2,3$). Therefore, we may first choose a colour for $vu_1$ so that $\max\{c(vu_1),c(vw)\}\geq 5$ (note that this guarantees that if $u_i$, $i\in\{2,3\}$, is of degree $2$, then $s(v)\neq s(u_i)$). Thus we are left with lists of size at least $3$ of available colours for $vu_2$ and $vu_3$, from which it is sufficient to choose distinct colours so that $s(v)\neq s(u_1)$ and $s(v)\neq s(w)$. This is feasible by Lemma~\ref{MartheLemma} because we may obtain at least $3$ different sums at $v$. By our construction, in order to eliminate the remaining potential conflicts it is then sufficient to choose appropriate colours for the formerly uncoloured edges incident with bad $3$-vertices.
    \item
    We may thus assume that $k\geq 7$. Then we colour $H-\{vu_1,vu_2\}$ by minimality, and for every $u_i$, $i=1,2$ which is a bad $3$-vertex we uncolour an edge joining it with a vertex of degree $2$. Then for every $vu_i$, $i=1,2$, we have forbidden at most $3$ colours of its adjacent edges and at most $2$ more constraints guaranteeing $s(v)\neq s(u_{3-i})$ ($i=1,2$) and $s(u_i)\neq s(u'_i)$, where $u'_i$ ($i=1,2$) is the neighbour of $u_i$ of minimal degree distinct from $v$. Altogether we are left with lists of available colours of sizes at least $k+1-3-2\geq 3$, and need only choose distinct values from these two lists so that $s(v)\neq s(w)$ and $s(v)\neq s(u_3)$. This is feasible by Lemma~\ref{MartheLemma} because we may obtain at least $3$ different sums at $v$. Again by our construction, in order to eliminate the remaining potential conflicts it is then sufficient to choose appropriate colours for the formerly uncoloured edges incident with bad $3$-vertices.
    \end{itemize}

In each case, we obtain an extension of the colouring to the whole $H$, a contradiction. $\blacksquare$
     %

\end{enumerate}

\end{proof}

\subsection{Discharging procedure}

In this subsection we use the discharging technique exploiting the vertices of the graph $H$.
For this aim we first define the \emph{weight
function} $\omega: V(H) \rightarrow \mathbb{R}$ by setting $\omega(x)=d(x)-3$ for every $x\in V(H)$.
%
Next we shall apply so called \emph{Ghost vertices method}, introduced earlier by Bonamy, Bousquet and Hocquard~\cite{BonamyEtAl},
and based on the following observation (where given any subsets $U,U'\subseteq V(H)$ and a vertex $v$, $d_U(v)$ denotes the number of neighbours of $v$ from $U$, while $E(U,U')$ is the set of edges joining $U$ and $U'$ in the graph $H$).

\begin{observation}\label{obs1v}
\noindent Let $V_1 \cup V_2$ be a partition of $V(H)$ where, say $V_1$ is the set of vertices of degree at least $2$ and $V_2$ the set of vertices of degree $1$ in $H$;
\begin{itemize}
\item every vertex $u$ in $H$ has an initial weight $w(u)=d(u)-3$.
\item If we can discharge the weights in $H$ so that:
\begin{enumerate}
\item every vertex in $V_1$ has a non-negative weight;
\item and every vertex $u$ in $V_2$ has a final weight of at least $d(u)-3+d_{V_1}(u)$, then\\

for $\omega'$ the new weight assignment, we have $\sum_{v \in V_2} (d(v)-3+d_{V_1}(v)) \leq \sum_{v \in V_2} \omega'(v)$, as well as\\

$\sum_{v \in V} \omega(v)=\sum_{v \in V} \omega'(v)$ and $\sum_{v \in V_1} \omega'(v) \geq 0$. Therefore,

\begin{eqnarray}
   \sum_{v \in V_1} (d_{V_1}(v)-3) & \geq & \sum_{v \in V_1} (d_{V_1}(v)-3) + \sum_{v \in V_2} (d(v)-3+d_{V_1}(v)) - \sum_{v \in V_2} \omega'(v) \nonumber\\
    & \geq & \sum_{v \in V_1} (d_{V_1}(v)-3) + |E(V_1,V_2)|+\sum_{v \in V_2} (d(v)-3) - \sum_{v \in V_2} \omega'(v) \nonumber\\
     & \geq & \sum_{v \in V_1} (d(v)-3) + \sum_{v \in V_2} (d(v)-3) - \sum_{v \in V_2} \omega'(v) \nonumber\\
     & \geq & \sum_{v \in V} \omega(v) - \sum_{v \in V_2} \omega'(v) \nonumber\\
   & \geq & \sum_{v \in V_1} \omega'(v) \nonumber\\
    & \geq & 0. \nonumber
\end{eqnarray}
\end{enumerate}

Thus we can conclude that ${\rm mad}(H) \geq {\rm mad}(H[V_1]) \geq 3$.

\end{itemize}

\end{observation}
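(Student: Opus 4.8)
The plan is to treat the Observation as a purely arithmetic consequence of weight conservation together with the two discharging hypotheses, and then to translate the resulting inequality into a statement about the average degree of the induced subgraph $H[V_1]$. The only structural input I need is the edge count between the two parts: counting each edge of $E(V_1,V_2)$ from its endpoint in $V_2$ gives the identity $\sum_{u\in V_2} d_{V_1}(u)=|E(V_1,V_2)|$, and (since $H$ has no isolated edges) in fact every $u\in V_2$ satisfies $d_{V_1}(u)=1$.

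First I would record that discharging only moves weight between vertices, so the total is conserved: $\sum_{v\in V}\omega'(v)=\sum_{v\in V}\omega(v)=\sum_{v\in V}(d(v)-3)$. Splitting this along the partition $V=V_1\cup V_2$ and isolating the $V_1$-contribution of $\omega'$, I obtain
\[
\sum_{v\in V_1}\omega'(v)=\sum_{v\in V}(d(v)-3)-\sum_{v\in V_2}\omega'(v).
\]
Hypothesis (1) gives $\sum_{v\in V_1}\omega'(v)\geq 0$, while hypothesis (2), summed over $V_2$, bounds the subtracted term from below by $\sum_{v\in V_2}(d(v)-3+d_{V_1}(v))$. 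Substituting this lower bound into the displayed identity bounds the nonnegative quantity $\sum_{v\in V_1}\omega'(v)$ from above by an expression involving only vertex degrees.

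The decisive step is a double count of the boundary edges $E(V_1,V_2)$. On the $V_1$ side I would write $\sum_{v\in V_1}(d(v)-3)=\sum_{v\in V_1}(d_{V_1}(v)-3)+|E(V_1,V_2)|$, using $\sum_{v\in V_1} d_{V_2}(v)=|E(V_1,V_2)|$; on the $V_2$ side I replace $\sum_{v\in V_2} d_{V_1}(v)$ by $|E(V_1,V_2)|$. The two occurrences of $|E(V_1,V_2)|$, one with a plus and one with a minus sign, then cancel, and after collecting the degree-$1$ terms $\sum_{v\in V_2}(d(v)-3)$ on both sides the whole chain collapses to
\[
\sum_{v\in V_1}(d_{V_1}(v)-3)\ \geq\ \sum_{v\in V_1}\omega'(v)\ \geq\ 0.
\]

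It remains to translate this last inequality. Since $\sum_{v\in V_1} d_{V_1}(v)=2|E(H[V_1])|$, the bound is exactly $2|E(H[V_1])|\geq 3|V_1|$, that is, the average degree of $H[V_1]$ is at least $3$. Taking $H[V_1]$ itself as the witnessing subgraph in the definition of ${\rm mad}$ gives ${\rm mad}(H[V_1])\geq 3$, and since $H[V_1]\subseteq H$ we conclude ${\rm mad}(H)\geq {\rm mad}(H[V_1])\geq 3$. I expect the only real subtlety to be the bookkeeping of the boundary edges: one must verify that the extra weight demanded at the degree-$1$ vertices in hypothesis (2) (the summand $d_{V_1}(u)$) is compensated exactly by the boundary incidences counted from the $V_1$ side, so that these contributions telescope and leave behind only the internal degrees of $H[V_1]$.
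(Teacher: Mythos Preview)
Your proposal is correct and follows essentially the same argument as the paper: weight conservation, the two discharging hypotheses, and the double count $\sum_{v\in V_1}d_{V_2}(v)=\sum_{v\in V_2}d_{V_1}(v)=|E(V_1,V_2)|$ combine to yield $\sum_{v\in V_1}(d_{V_1}(v)-3)\geq \sum_{v\in V_1}\omega'(v)\geq 0$, hence ${\rm mad}(H[V_1])\geq 3$. The paper merely presents the same computation as a single descending chain of inequalities rather than as separate identities and bounds, so there is no substantive difference.
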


In other words, the vertices in $V_2$ can be seen but, in a way, do not contribute to the sum analysis.
\bigskip

In order to finish the proof of Theorem~\ref{th:summad}, it suffices to obtain a contradiction, \emph{e.g.} with the fact that ${\rm mad}(H)<3$,
implying that in fact no counterexample to its thesis may exist.
By Observation~\ref{obs1v}, it is thus enough to
%
redistribute the weight (defined by $\omega$ above) in $H$ 
so that every vertex of degree at least $2$ has a non-negative resulting weight and every vertex of degree one has weight at least $-1$.

\medskip

The discharging rules we shall use for this aim are defined as follows:

\begin{enumerate}
\item[(R1)] A vertex of degree $d\geq 5$ gives $1$ to every adjacent $1$-vertex.
\item[(R2)] A vertex of degree $d\geq 4$ gives $1$ to every adjacent bad $2$-vertex.
\item[(R3)] A vertex of degree $d\geq 3$ 
gives $\frac{1}{2}$ to every adjacent good $2$-vertex.
\item[(R4)] A vertex of degree $d\geq 4$ gives $\frac{1}{2}$ to every adjacent bad $3$-vertex.
\end{enumerate}


Let $v$ be a vertex in $H$. We consider different cases depending on the degree of $v$.

\begin{itemize}
\item Assume $d(v)=1$. By $(C1)$, $v$ is adjacent to a vertex of degree at least $5$. Thus, by $(R1)$, $v$ receives $1$. So every vertex of degree $1$ in $H$ has an initial weight of $-2$, gives nothing according to our rules and receives $1$, hence has the final weight of $-1$.

\item Assume $d(v)=2$. First, suppose that $v$ is a bad $2$-vertex. Then by $(C2)$, $v$ is adjacent to a vertex of degree at least $4$, and thus receives at least $1$ by $(R2)$ (and gives away nothing according to the rules above). Suppose now that $v$ is a good $2$-vertex. Then by $(C1)$, $v$ is adjacent with two vertices of degree at least $3$, and thus receives at least $\frac{1}{2}$ from both by $(R3)$ (and gives away nothing). In both cases $v$ has a non-negative final weight.

\item Assume $d(v)=3$. First, suppose that $v$ is a bad $3$-vertex. Then by $(C3)$ it gives away (at most) $\frac{1}{2}$ due to rule $(R3)$, but also receives at least $2\times\frac{1}{2}$ by $(R4)$, as $(C3)$ implies that $v$ must have two neighbours of degree at least $4$. Suppose now that $v$ is a good $3$-vertex. Then $v$ gives away nothing and receives nothing. In both cases $v$ has a non-negative final weight.

\item Assume $d(v) \ge 4$. Then consider the following subcases:
\begin{itemize}
\item if $v$ has at least $2$ deficient neighbours, then by
$(C5)$ and $(C6)$, these are both of degree $1$ and by $(C1)$, $d(v) \ge 5$. Moreover, in such a case, additionally by $(C7)$, $v$ is adjacent with no other deficient or half-deficient vertices (except $1$-vertices), while by $(C8)$, $v$ can be adjacent with at most $d-3$ vertices of degree $1$, and thus by $(R1)$, $\omega'(v)\geq 0$;
\item if $v$ has exactly $1$ deficient neighbour, then we may assume that it has at least one half-deficient neighbour, as otherwise by $(R1)$ or $(R2)$, $\omega'(v)\geq (d(v)-3)-1\geq 0$. Thus by $(C9)$ and $(C1)$, $d(v)\geq 5$. On the other hand, by $(C10)$ and $(C11)$, at least $k-d(v)+1$ neighbours of $v$ are neither deficient nor half-deficient, and thus by $(R1)$, $(R2)$, $(R3)$ and $(R4)$, $\omega'(v)\geq (d(v)-3)-1-\frac{1}{2}[(d(v)-1)-(k-d(v)+1)]=\frac{1}{2}k-3\geq 0$;
\item assume then finally that $v$ has no deficient neighbours. If $d(v) \ge 6$, then by $(R3)$ and $(R4)$, $\omega'(v)\geq d(v)-3-\frac{1}{2}d(v)\geq 0$. Consider now the case where $d(v) \leq 5$:
\begin{itemize}
\item if $d(v)=5$, then by $(C12)$, $v$ has at most $4$ half-deficient neighbours, and thus by $(R3)$ and $(R4)$, $\omega'(v)\geq 2-4\times\frac{1}{2}\geq 0$;
\item if $d(v)=4$, then by $(C13)$, $v$ has at most $2$ half-deficient neighbours, and thus by $(R3)$ and $(R4)$, $\omega'(v)\geq 1-2\times\frac{1}{2}\geq 0$.
\end{itemize}
In both cases $v$ has a non-negative final weight.
\end{itemize}
\end{itemize}

This, by Observation~\ref{obs1v} completes the proof of Theorem~\ref{th:summad}. $\blacksquare$

\end{document}